\newtheorem{theorem}{Theorem}[section]
\theoremstyle{definition}
\newtheorem{corollary}[theorem]{Corollary}
\theoremstyle{remark}
\newtheorem*{remark}{Remark}
\numberwithin{equation}{section}
\DeclareMathOperator{\spann}{span}
\newcommand{\B}{{\mathcal B}}
\newcommand{\C}{{\mathcal C}}
\newcommand{\D}{{\mathcal D}}
\newcommand{\N}{{\mathcal N}}
\newcommand{\E}{{\mathcal E}}
\newcommand{\G}{{\mathcal G}}
\newcommand{\M}{{\mathcal M}}
\newcommand{\NN}{\mathbb{N}}
\newcommand{\RR}{\mathbb{R}}
\newcommand{\abs}[1]{{\left|#1\right|}} 
\newcommand{\norm}[1]{{\left\|#1\right\|}}
\newcommand{\Bignorm}[1]{{\Big\|#1\Big\|}}
\newcommand{\biggnorm}[1]{{\bigg\|#1\bigg\|}}
\begin{document}
\numberwithin{equation}{section}

\title[p-summing multiplication operators]{$p$-summing multiplication operators, dyadic Hardy spaces and atomic decomposition}
\author{Paul F.X. M\"{u}ller \and Johanna Penteker }
%\keywords{-}
\subjclass[2000]{42B30 46B25 46B09 46B42 46E40 47B10 60G42}
\address{P.F.X. M\"uller, Institute of Analysis, Johannes Kepler University Linz, Austria, 4040 Linz, Altenberger Strasse 69}
\email{pfxm@bayou.uni-linz.ac.at}
\address{J. Penteker, Institute of Analysis, Johannes Kepler University Linz, Austria, 4040 Linz, Altenberger Strasse 69}
\email{johanna.penteker@jku.at}
\date{\today}

\maketitle

\subsection*{Abstract}
For $u \in H^p$, $0<p\leq 2$, with Haar expansion $u=\sum x_Ih_I$ we constructively determine the Pietsch measure of the $2$-summing multiplication operator
	\[\mathcal{M}_u:\ell^{\infty} \rightarrow H^p, \quad (\varphi_I) \mapsto \sum \varphi_Ix_Ih_I.
\]
Our method yields a constructive proof of Pisier's decomposition of $u \in H^p$
	\[|u|=|x|^{1-\theta}|y|^{\theta}\quad\quad \text{ and }\quad\quad \|x\|_{X_0}^{1-\theta}\|y\|^{\theta}_{H^2}\leq C\|u\|_{H^p},
\] where $X_0$ is Pisier's extrapolation lattice associated to $H^p$ and $H^2$. 
Our construction of the Pietsch measure for the multiplication operator $\mathcal{M}_u$ involves the Haar coefficients of $u$ and its atomic decomposition.

\section{Introduction}
\noindent 
\textbf{The spaces} of this paper are dyadic Hardy spaces $H^p$ with the Haar system $\{h_I\}_{I \in \D}$ (indexed by the dyadic intervals $\D$) as their unconditional basis. For $u=\sum_{I \in \D}x_Ih_I$ we have
	\[\norm{u}_{H^p}=\left( \int_0^1\bigg(\sum_{I \in \D}x_I^21_I(t)\bigg)^{\frac{p}{2}}dt\right)^{\frac{1}{p}}.
\]

\noindent
\textbf{The operators} of this paper are multipliers acting on the Haar system: For $u \in H^p$ with Haar expansion $u=\sum_{I \in \D}x_Ih_I$ the multiplier $\M_u$ is defined by
	\[\M_u:\ell^{\infty}(\D)\rightarrow H^p
\]and
	\[\M_u(\varphi)=\sum_{I \in \D}{\varphi_Ix_Ih_I}. 
\]Clearly $\M_u$ is a bounded operator,
\begin{equation}
\label{eq:introd1}
	\norm{\M_u\varphi}_{H^p} \leq \norm{u}_{H^p}\sup\abs{\varphi_I},
\end{equation}
since $\{h_I\}$ is an unconditional basis of $H^p$. Moreover, for $0<p\leq 2$, $H^p$ is of cotype $2$. Therefore, by the work of Dubinsky, Pe{\l}czy{\'n}ski, Rosenthal \cite{MR0365097} $\M_u$ is $2$-summing. Hence, by the work of Pietsch \cite{MR0216328} $\M_u$ has a Pietsch measure. It is easy to show (Step 2 and 3 in the proof of Theorem \ref{th:vec1}) that the measure has the following form. There exists $\omega=(\omega_I)_{I \in \D}$ with $\omega_I \geq 0 $ and $\sum\omega_I \leq 1$ such that we have a significant strengthening of the basic estimate (\ref{eq:introd1}):
\begin{equation}
	\norm{\M_u\varphi}_{H^p} \leq C\norm{u}_{H^p}\bigg(\sum_{I \in \D}\abs{\varphi_I}^2\omega_I\bigg)^{\frac{1}{2}}.
\end{equation} 
The existence of the weight $\omega=(\omega_I)_{I \in \D}$ is guaranteed by abstract theory (Pietsch measure, Hahn-Banach theorems). 

\subsection*{The main result.} In (Theorem \ref{th:main1}) we give explicit formulas for the weights $\omega=(\omega_I)$, using as input the Haar coefficients $(x_I)_{I \in \D}$ of $u \in H^p$. We obtain several extensions and variants of the formulas referred to above. These include vector-valued dyadic Hardy spaces and Triebel-Lizorkin spaces.

Multiplier operators, as described above, arise with interpolation and extrapolation of Banach lattices. Here we refer to Pisier's proof in \cite{MR557371} of the equation
\begin{equation}
X=(X_0)^{1-\theta}(L^2)^{\theta},
\label{eq:pislattice}
\end{equation}
where $X$ is a $q$-concave and $q'$-convex Banach lattice, $\theta=\frac{2}{q}$ and $X_0$ is Pisier's extrapolation lattice for $X$ and $L^2$ (see Section \ref{sec:application}). The equation (\ref{eq:pislattice}) asserts that for $u \in X$ there is $y \in L^2(\Omega, \Sigma, \mu)$ so that
\begin{equation}
\label{eq:pisierdecomp}
	\left(\abs{u}\abs{y}^{-\theta}\right)^{\frac{1}{1-\theta}} \in X_0.
\end{equation}
In order to obtain $y \in L^2(\Omega, \Sigma, \mu)$ the proof in \cite{MR557371} sets up the following multiplication operator
\begin{equation*}
	\M_u:L^{\infty}(\Omega, \Sigma, \mu) \rightarrow X
\end{equation*} by putting
	\[\M_u(\varphi)(t)=u(t)\varphi(t), \hspace{0.3 cm} t \in \Omega.
\] 
Exploiting the work of Maurey \cite{MR0344931}, Rosenthal \cite{MR0430749} and Pietsch \cite{MR0216328}, Pisier shows in \cite{MR557371} that there exists a density $\omega \in L^1(\Omega, \Sigma, \mu)$ so that 
	\[\norm{\M_u\varphi}_X\leq C\norm{u}_X\left(\int_{\Omega}{\abs{\varphi(t)}^q\omega(t)d\mu(t)}\right)^{\frac{1}{q}}. 
\] To obtain (\ref{eq:pislattice}) and (\ref{eq:pisierdecomp}) Pisier \cite{MR557371} puts finally
\[y(t)=\omega(t)^{\frac{1}{2}}. 
\]
\subsection*{The application.} We constructively determine for the special Banach lattice $X=H^p$ the density $\omega \in \ell^1(\D)$ and therefore have a constructive proof of
	\[ H^p=(X_0)^{1-\theta} (H^2)^{\theta}, 
	\]where $X_0$ is Pisier's extrapolation lattice for $H^p$ and $H^2$, see Theorem \ref{th:app}.\\
	
\noindent
\textbf{The organization} of this paper is the following: Section \ref{sec:prelim} contains the preliminaries. Section \ref{sec:mainth} contains the main result of this paper (Theorem \ref{th:main1}). We constructively determine Pietsch measures for absolutely summing multipliers into $H^p$ spaces. We prove extensions to Triebel-Lizorkin spaces and vector-valued $H^p_X$ spaces.  
In Section \ref{sec:application} we apply our approach to determine constructively the Calder\'on product 
	\[f_p^q=(X_0)^{1-\theta}(f_q^q)^{\theta},
\]where $X_0$ is Pisier's extrapolation lattice for the Triebel-Lizorkin spaces $f_p^q$ and $f_q^q$. 
\subsection*{Acknowledgements}
We would like to thank the referee for a very helpful report improving the presentation of our work.

\section{Preliminaries}
\label{sec:prelim}

\subsection{Banach space preliminaries}

\subsubsection*{Kahane's inequality and Kahane's contraction principle}
See \cite{MR833073}. Let $\{r_n\}_{n \in \NN}$ denote the independent Rademacher system.
For any $0<p<q<\infty$ there is a constant $C_{p,q}$ such that for any Banach space $X$ and for any sequence $(x_n)_{n=1}^{N}$ in $X$ we have
\begin{equation}
\label{eq:Kahane}
\bigg(\int_0^1{\biggnorm{\sum_{n=1}^N{r_n(t)x_n}}_X^qdt}\bigg)^{\frac{1}{q}} \leq C_{p,q} \bigg(\int_0^1{\biggnorm{\sum_{n=1}^N{r_n(t)x_n}}_X^pdt}\bigg)^{\frac{1}{p}}.
\end{equation}
Let $X$ be a Banach space and $(x_n)_{n=1}^{N}$ a sequence in $X$. Then for all sequences $(a_n)_{n=1}^N$ of real numbers and for all $ 1\leq p<\infty$ one has
\begin{equation}
\label{eq:Kahanecon}
	\bigg(\int_0^1{\biggnorm{\sum_{n=1}^N{r_n(t)a_nx_n}}_X^p dt}\bigg)^{\frac{1}{p}} \leq \sup_{1\leq n\leq N}\abs{a_n}\bigg(\int_0^1{\biggnorm{\sum_{n=1}^N{r_n(t)x_n}}_X^p dt}\bigg)^{\frac{1}{p}}.
\end{equation}

\subsubsection*{Cotype of a Banach space}
See e.g. \cite{MR1102015}.
Let $2\leq q\leq \infty$. Let again $\{r_n\}_{n \in \NN}$ denote the independent Rademacher system. A Banach space $X$ is called of \textit{cotype q} if there is a constant $C$ such that for all finite sequences $(x_n)$ in $X$ 
\begin{equation}
\label{eq:cotype}
	\bigg(\sum_{n=1}^N\norm{x_n}^q\bigg)^{\frac{1}{q}}\leq C \bigg(\int_0^1{\biggnorm{\sum_{n=1}^N{r_n(t)x_n}}_X^2dt}\bigg)^{\frac{1}{2}}.
\end{equation}
We denote by $C_q(X)$ the smallest possible constant $C$. 
A Banach space $X$ is of \textit{nontrivial cotype} if it is of cotype $q<\infty$. 

Every $L^p$-space is of cotype $\max(p,2)$. If $X$ is a Banach space of cotype $q$, then $L^r_X$ is of cotype $\max(r,q)$, cf.\cite{MR1102015}.

\subsubsection*{p-summing operators} See e.g. \cite{MR0216328}. 
Let $X,Y$ be Banach spaces and let $1\leq p\leq \infty $. An operator $T\in L(X,Y)$ is called \textit{p-summing} if there is a constant $K$ so that for every choice of an integer $n$ and vectors $\{x_i\}_{i=1}^n$ in $X$, we have
\begin{equation}
\label{eq:summing}
	\left(\sum_{i=1}^n{\norm{Tx_i}^p}\right)^{\frac{1}{p}}\leq K\sup_{\norm{x^*}_{X^*}\leq 1}\left(\sum_{i=1}^n{\abs{x^*(x_i)}}^p\right)^{\frac{1}{p}}.
\end{equation}
The smallest possible constant $K$ is denoted by $\pi_p(T)$. The class of all $p$-summing operators in $L(X,Y)$ is denoted by $\Pi_p(X,Y)$. $\pi_p$ defines a norm on $\Pi_p(X,Y)$ with $\norm{T}\leq \pi_p(T)$ for all $T \in \Pi_p(X,Y)$.

\subsubsection*{Maurey's theorem}
We next recall Maurey's theorem on $p$-summing operators from a $C(K)$-space into a Banach space of nontrivial cotype. We refer to \cite{MR0420319} and the exposition of Maurey's theorem in \cite{MR1342297}.
\begin{theorem}[\cite{MR0420319}]
\label{th:mau}
For all $2 \leq r < \infty$ and $r<s<\infty$ there exists a positive constant $K_{s,r}$ such that for every Banach space $Y$ of cotype $r$ with cotype-r constant $C_r(Y)$ and for every compact Hausdorff space $K$ we have
	\[L(C(K),Y)=\Pi_s(C(K),Y)
\]and for every $T \in L(C(K),Y)$ 
	\[\pi_s(T)\leq  K_{s,r}C_r(Y)\norm{T}.
\]
\end{theorem}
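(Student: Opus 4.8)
The plan is to combine the Pietsch domination theorem with the cotype hypothesis through an intermediate ``$(r,1)$-type'' estimate, which is then upgraded to a genuine $s$-summing estimate by a real-interpolation (Lorentz space) argument exploiting the special structure of $C(K)$. \emph{Step 1 (reduction).} By Pietsch's theorem, $T\in L(C(K),Y)$ satisfies $\pi_s(T)\le K$ if and only if there is a regular Borel probability measure $\mu$ on $K$ with $\norm{Tf}_Y\le K\bigl(\int_K\abs f^s\,d\mu\bigr)^{1/s}$ for all $f\in C(K)$; and, by the usual Ky Fan / Hahn--Banach separation argument, such a $\mu$ with constant $K$ exists as soon as one verifies the scalar inequality
\[
	\Bigl(\sum_{i=1}^n\norm{Tf_i}_Y^s\Bigr)^{1/s}\;\le\;K\sup_{t\in K}\Bigl(\sum_{i=1}^n\abs{f_i(t)}^s\Bigr)^{1/s}
\]
for every finite family $f_1,\dots,f_n$ in $C(K)$. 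Hence it suffices to prove this inequality with $K=K_{s,r}\,C_r(Y)\,\norm T$.

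\emph{Step 2 (the cotype estimate).} I would first prove that, for every finite family $f_1,\dots,f_n$ in $C(K)$, one has the endpoint inequality
\[
	\Bigl(\sum_{i=1}^n\norm{Tf_i}_Y^r\Bigr)^{1/r}\;\le\;2\,C_r(Y)\,\norm T\,\Bignorm{\sum_{i=1}^n\abs{f_i}}_{C(K)}.
\]
For \emph{nonnegative} $f_i$ with $\sum_if_i\le 1$ pointwise this is immediate: applying the cotype-$r$ inequality \eqref{eq:cotype} to the vectors $Tf_i$ and using the linearity of $T$ gives
\[
	\Bigl(\sum_i\norm{Tf_i}_Y^r\Bigr)^{1/r}\le C_r(Y)\Bigl(\int_0^1\Bignorm{T\Bigl(\sum_ir_i(\omega)f_i\Bigr)}_Y^2\,d\omega\Bigr)^{1/2}\le C_r(Y)\,\norm T\Bigl(\int_0^1\sup_{t\in K}\Bigl|\sum_ir_i(\omega)f_i(t)\Bigr|^2d\omega\Bigr)^{1/2},
\]
and since $\bigl|\sum_ir_i(\omega)f_i(t)\bigr|\le\sum_if_i(t)\le 1$ for every $t$ and every sign pattern, the last integral is $\le\norm T^2$. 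The general case follows by writing $f_i=f_i^+-f_i^-$ (both continuous, with $\sum_if_i^+\le\norm{\sum_i\abs{f_i}}_\infty$, and likewise for $f_i^-$), applying the previous case to each, and using the triangle inequality in $\ell^r$, which costs the factor $2$.

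\emph{Step 3 (upgrading $r$ to $s$; the main obstacle).} It remains to pass from the exponent $r$, where cotype lives, to an arbitrary $s>r$; this is the heart of the matter and cannot be obtained by a soft operator-ideal inclusion, since the analogous implication fails for general domains and one must use that $T$ acts on a $C(K)$-space. The inequality of Step 2 says precisely that $T$ is $(r,1)$-summing on $C(K)$ (i.e.\ it satisfies that inequality for all finite families), and for such operators there is a Pietsch--Maurey factorization: there exists a probability measure $\mu$ on $K$ with
\[
	\norm{Tf}_Y\;\le\;c_r\,C_r(Y)\,\norm T\,\norm f_{L^{r,1}(\mu)},\qquad f\in C(K),
\]
with $c_r$ depending only on $r$ (proved again by a separation argument, now against the Lorentz norm; equivalently one may peel each $f_i=\sum_kf_i^{(k)}$ along the dyadic level sets of $\abs{f_i}$, so that $\abs{f_i^{(k)}}\le 2^{-k}$, control the top layers by the crude bound $\norm{Tf_i^{(k)}}_Y\le 2^{-k}\norm T$ together with the pointwise $\ell^s$ constraint, control the deep layers by Step 2, and sum). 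Finally, on a probability space $\norm f_{L^{r,1}(\mu)}\le\tfrac{s}{s-r}\norm f_{L^s(\mu)}$ whenever $s>r$, an elementary consequence of estimating the distribution function of $f$ by $\min\{1,\lambda^{-s}\norm f_{L^s(\mu)}^s\}$; composing the two displays gives $\norm{Tf}_Y\le\tfrac{s}{s-r}\,c_r\,C_r(Y)\,\norm T\,\norm f_{L^s(\mu)}$, which is exactly Pietsch domination for the exponent $s$ with $K_{s,r}=\tfrac{s}{s-r}c_r$. The step I expect to be genuinely substantial is this factorization of $(r,1)$-summing operators through $L^{r,1}(\mu)$ (equivalently, arranging the level-set decomposition so that the resulting series converges, which happens precisely because $s>r$); Steps 1 and 2 are routine.
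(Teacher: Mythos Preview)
The paper does not prove Theorem~\ref{th:mau}; it is quoted from Maurey~\cite{MR0420319} with a pointer to the exposition in~\cite{MR1342297}, and is used as a black box in the proof of Theorem~\ref{th:vec1}. There is therefore no ``paper's own proof'' to compare your proposal against.

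For what it is worth, your outline is essentially the standard argument one finds in the cited references: the cotype hypothesis gives the $(r,1)$-summing estimate (your Step~2), and the passage to $s>r$ via an $L^{r,1}(\mu)$ factorization (your Step~3) is exactly Maurey's mechanism. Two small remarks. First, in Step~2 the phrase ``the last integral is $\le\norm T^2$'' is garbled: the integral $\int_0^1\sup_t\abs{\sum_ir_i(\omega)f_i(t)}^2\,d\omega$ is bounded by $1$ under your normalization, not by $\norm T^2$; the bound $C_r(Y)\norm T$ then follows. Second, you are right that Step~3 is the genuinely nontrivial part, and your sketch (either via separation against the Lorentz norm, or via a dyadic level-set decomposition) is the correct shape of the argument, but as written it is only a sketch: the existence of the factorizing measure $\mu$ for $(r,1)$-summing operators on $C(K)$ requires its own Pietsch-type domination theorem, which you have asserted rather than proved.
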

The case when the target space $Y$ is of cotype 2 allows the following strengthening of the conclusion. This is the context of the theorem in \cite{MR0365097}.
\begin{theorem}[\cite{MR0365097}]
\label{th:dpr}
There exists a positive constant $B$ such that for every Banach space $Y$ of cotype $2$ with cotype-$2$ constant $C_2(Y)$ and for every compact Hausdorff space $K$ we have
	\[L(C(K),Y)=\Pi_2(C(K),Y)
\]and for every $T \in  L(C(K),Y)$
	\[\pi_2(T)\leq  BC_2(Y)^2\norm{T}. 
\]
\end{theorem}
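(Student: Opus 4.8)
\smallskip
\noindent\emph{Proof strategy.} The plan is to deduce Theorem~\ref{th:dpr} from Maurey's theorem (Theorem~\ref{th:mau}) by a Khintchine-averaging argument in which cotype $2$ is used a second time. One cannot argue directly: substituting $\norm{Tf}\le\norm{T}\norm{f}_\infty$ into the definition of $\pi_2$ would amount to $\ell^{\infty}$ having finite cotype, which is false, so genuine structure of $T$ is needed. I would begin by reducing the $2$-summing estimate to a pointwise statement on $C(K)$: for a finite family $f_1,\dots,f_n\in C(K)$ one has
\[
	\sup_{\norm{x^*}_{C(K)^*}\leq 1}\Big(\sum_{i=1}^n\abs{x^*(f_i)}^2\Big)^{1/2}=\biggnorm{\Big(\sum_{i=1}^n\abs{f_i}^2\Big)^{1/2}}_{\infty},
\]
where ``$\geq$'' follows from point evaluations $\delta_x$ and ``$\leq$'' from $\abs{x^*(\sum b_if_i)}\le\norm{\sum b_if_i}_\infty\le\norm{(\sum\abs{f_i}^2)^{1/2}}_\infty(\sum b_i^2)^{1/2}$ by Cauchy--Schwarz. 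Thus it suffices to produce a universal constant $B$ with $(\sum_i\norm{Tf_i}^2)^{1/2}\le BC_2(Y)^2\norm{T}$ whenever $\sum_i\abs{f_i(x)}^2\le1$ for all $x\in K$.

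Next I would apply Theorem~\ref{th:mau} with $r=2$ and a fixed exponent $s\in(2,\infty)$, say $s=4$: this gives that $T$ is $s$-summing with $\pi_s(T)\le K_{s,2}C_2(Y)\norm{T}$. The Pietsch domination theorem then yields a probability measure $\mu$ on $K$ with $\norm{Tf}\le\pi_s(T)(\int_K\abs{f}^s\,d\mu)^{1/s}$ for every $f\in C(K)$. Given $f_1,\dots,f_n$ normalised as above, I would apply the cotype $2$ inequality \eqref{eq:cotype} to the vectors $Tf_1,\dots,Tf_n$, use linearity in the form $\sum_i r_i(t)Tf_i=T(\sum_i r_i(t)f_i)$ together with the domination inequality (legitimately, since $\sum_i r_i(t)f_i\in C(K)$), then pull the exponent $2/s\le 1$ outside the $t$-integral by Jensen's inequality for the concave function $u\mapsto u^{2/s}$ and interchange the order of integration. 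This leads to
\[
	\sum_{i=1}^n\norm{Tf_i}^2\leq C_2(Y)^2\pi_s(T)^2\Big(\int_K\int_0^1\Big|\sum_{i=1}^n r_i(t)f_i(x)\Big|^s dt\,d\mu(x)\Big)^{2/s}.
\]
For each fixed $x\in K$, Kahane's inequality \eqref{eq:Kahane} with $X=\RR$ (i.e.\ Khintchine's inequality) bounds the inner $t$-integral by $C_{2,s}^s(\sum_i\abs{f_i(x)}^2)^{s/2}\le C_{2,s}^s$; integrating against the probability measure $\mu$ and inserting the Maurey estimate for $\pi_s(T)$ gives $\sum_i\norm{Tf_i}^2\le(C_{2,s}K_{s,2})^2C_2(Y)^4\norm{T}^2$, which is the desired bound with $B=C_{2,s}K_{s,2}$.

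The feature that makes this sharper than Theorem~\ref{th:mau}, and the place I would take most care, is that cotype $2$ is exploited twice --- once inside Maurey's theorem (already responsible for one factor $C_2(Y)$) and once in the averaging step --- producing the quadratic dependence $C_2(Y)^2$. The scheme is rigid at the exponent $2$: for a space $Y$ of cotype $r>2$ the $C(K)$-normalisation would read $\sum_i\abs{f_i(x)}^r\le1$, which does not control $\sum_i\abs{f_i(x)}^2$ (take $f_i\equiv n^{-1/r}$), so the Khintchine step no longer closes; this is exactly why Theorem~\ref{th:mau} produces only $s$-summing for $s>r$, whereas the cotype $2$ hypothesis lets us reach $s=2$. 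The remaining verifications --- that the Pietsch measure of an operator on $C(K)$ may be realised on $K$ itself, the Fubini interchange, and the application of Jensen --- are routine.
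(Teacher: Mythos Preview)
The paper does not supply a proof of Theorem~\ref{th:dpr}; it is quoted from \cite{MR0365097} as a known result (just as Theorem~\ref{th:mau} is quoted from \cite{MR0420319}), so there is no ``paper's own proof'' to compare against. Your bootstrapping argument --- apply Theorem~\ref{th:mau} with $r=2$ and some fixed $s>2$, invoke Pietsch domination, then feed the Rademacher averages $T(\sum_i r_i f_i)$ through the cotype~$2$ inequality a second time and close with scalar Khintchine --- is the standard route to this theorem and is correct as written; the Jensen step, the Fubini interchange, and the identification of the weak $\ell^2$ norm in $C(K)$ with $\norm{(\sum_i\abs{f_i}^2)^{1/2}}_\infty$ are all sound, and the double use of cotype~$2$ is precisely what produces the factor $C_2(Y)^2$.
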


\subsubsection*{Banach lattices} For general reference on Banach lattices we refer to \cite{MR540367} and on quasi Banach lattices to \cite{MR752808}, see also \cite{MR557371}.  

Let $(\Omega, \Sigma, \mu)$ be a measure space and $L^0(\Omega, \Sigma, \mu)$ the space of all measurable functions with real values. 
Let $X$ be a (quasi) Banach space, whose elements form a subspace of $L^0(\Omega, \Sigma, \mu)$. We call $X$ a (quasi) \textit{Banach lattice} over the measure space $(\Omega, \Sigma, \mu)$, if for all $f\in X$ and for all $g\in L^0(\Omega,\Sigma, \mu)$ with $ \abs{g} \leq \abs{f}$ holds that $g \in X$ and $\norm{g}_X \leq \norm{f}_X$.

\subsubsection*{q-convexity and q-concavity of Banach lattices} We refer to \cite{MR540367} and for quasi Banach lattices to \cite{MR871851}. 
Let $0 < q \leq \infty$. 
A (quasi) Banach lattice $X$ is called \textit{$q$-convex}, if there exists a constant $M>0$ such that
\begin{align}
\label{eq:convex}
\biggnorm{\left(\sum_{i=1}^n{\abs{x_i}^q}\right)^{\frac{1}{q}}}&\leq M\bigg(\sum_{i=1}^n\norm{x_i}^q\bigg)^{\frac{1}{q}} , 
\end{align}
for every choice of vectors $\{x_i\}_{i=1}^n$ in $X$. The smallest possible constant $M$ is denoted by $M^{(q)}(X)$. 
A (quasi) Banach lattice $X$ is called \textit{$q$-concave}, if there exists a constant $M>0$ such that
\begin{align}
\label{eq:concave}
\bigg(\sum_{i=1}^n\norm{x_i}^q\bigg)^{\frac{1}{q}}&\leq M\biggnorm{\left(\sum_{i=1}^n{\abs{x_i}^q}\right)^{\frac{1}{q}}}, 
\end{align}
for every choice of vectors $\{x_i\}_{i=1}^n$ in $X$. The smallest possible constant $M$ is denoted by $M_{(q)}(X)$.

Every q-concave Banach lattice with $q\geq 2$ is of cotype q, cf.\cite{MR540367,MR1342297}.

\subsubsection*{Multiplication operators}
\label{sec:Mult}
Here we collect crucial information on multiplication operators on C(K) spaces with values in a $q$-concave Banach lattice.

Let $K$ be a compact Hausdorff space and $\mu$ a regular Borel measure on K. Let $1 \leq q < \infty$ and $X$ a $q$-concave Banach lattice over the measure space $(K,\mu)$. Each $x \in X$ induces a bounded multiplication operator 
 	 \[\M_{x}:C(K) \rightarrow X, \varphi \mapsto \varphi \cdot x, \hspace{0.3 cm} \norm{\M_x}=\norm{x}_X.
 \]
Pisier \cite{MR557371} asserts that the multiplication operator is $q$-summing with $\pi_q(\M_x)=M_{(q)}(X)\norm{x}_X$, where $M_{(q)}(X)$ is the $q$-concave constant of $X$.  Explicitly this means that for $\varphi_1,\dots,\varphi_n \in C(K)$
\begin{equation}
\label{eq:pisarg}
\begin{split}
\bigg(\sum_{i=1}^n\norm{\M_x \varphi_i}_X^q\bigg)^{\frac{1}{q}}
%&\leq M_{(q)}(X) \biggnorm{\bigg(\sum_{i=1}^n{\abs{x\cdot\varphi_i}^q}\bigg)^{\frac{1}{q}}}_X\\
%&\leq M_{(q)}(X) \norm{x}_X \biggnorm{\bigg(\sum_{i=1}^n{\abs{\varphi_i}^q}\bigg)^{\frac{1}{q}}}_{C(K)}\\
&\leq M_{(q)}(X)\norm{x}_X \sup_{\stackrel{\norm{\varphi^*}\leq 1}{\varphi^*\in C(K)^*} }\bigg(\sum_{i=1}^n{\abs{\varphi^*(\varphi_i)}^q}\bigg)^{\frac{1}{q}}. 
\end{split}
\end{equation}

\subsection{Dyadic Hardy spaces}
\subsubsection*{Dyadic intervals}
 An interval $I \subseteq [0,1]$ is called a \textit{dyadic interval}, if there exists $n \in \NN_0$ and $1\leq k \leq 2^n$ such that
	\[I=\left[\frac{k-1}{2^n},\frac{k}{2^n}\right[.
\]

Let $\D=\{I \subseteq [0,1]: I \text{ is dyadic interval}\}$ and $\D_n=\{I \in \D:\abs{I}\geq 2^{-n}\}$. The set of dyadic intervals $\D$ is "nested" in the following sense: 
if $I,J \in \D$ are not disjoint, then either $I \subseteq J$ or $J \subseteq I$.

\subsubsection*{The spaces $\ell^1(\D)$ and $\ell^{\infty}(\D)$}
The space $\ell^1(\D)$ is the space of all summable sequences $s=(s_I)_{I \in \D}$ indexed by the dyadic intervals, i.e. 
	\[\sum_{I \in \D}\abs{s_I}<\infty,
\]
equipped with the norm
	\[\norm{s}_{1}=\sum_{I \in \D}\abs{s_I}.
\]
The space $\ell^{\infty}(\D)$ is the space of all bounded sequences $s=(s_I)_{I \in \D}$ indexed by the dyadic intervals equipped with the norm
	\[\norm{s}_{\infty}=\sup_{I\in \D}\abs{s_I}.
\]

\subsubsection*{Carleson constant}See \cite{MR2157745}.
Let $\E\subseteq \D$ be a non-empty collection of dyadic intervals. Then the Carleson constant of $\E$ is given by
\begin{equation}
\label{eq:carleson}
	\llbracket \mathcal{E}\rrbracket=\sup_{I \in \E}\frac{1}{\abs{I}}\sum_{J \in \E, J \subseteq I}{\abs{J}}. 
\end{equation}

\subsubsection*{Blocks of dyadic intervals}
Let $\mathcal{L}$ be a collection of dyadic intervals. We say that $\mathcal{C}(I)\subseteq \mathcal{L}$ is a block of dyadic intervals in $\mathcal{L}$ if the following conditions hold:

\begin{enumerate}
	\item The collection $\mathcal{C}(I)$ has a unique maximal interval, namely the interval I.
	\item If $J \in \mathcal{C}(I)$ and $K \in \mathcal{L}$, then
	
		\[J \subseteq K \subseteq I \text{ implies } K \in \mathcal{C}(I).
	\]
	\end{enumerate}

\subsubsection*{The Haar system}
We define the $L^{\infty}$- normalised Haar system $\{h_I\}_{I\in \D}$ indexed by dyadic intervals $I$ as follows:
	\[h_I=\begin{cases}
  1  &\text{ on the left half of I},\\
  -1  &\text{ on the right half of I},\\
  0  &\text{ otherwise.}
\end{cases}
\]

\subsubsection*{Dyadic Hardy Spaces $H^p$, $0<p\leq 2$}
\label{sec:hardy}
See \cite{MR2157745, MR2183484}.
Let $(x_I)_{I\in \D}$ be a real sequence. We define $f=(x_I)_{I \in \D}$ to be the real vector indexed by the dyadic intervals. We define the square function of $f$ as follows
\begin{equation}
\label{eq:squaref}
	S(f)(t)=\bigg(\sum_{I \in \D}{x_I^2 1_I(t)}\bigg)^{\frac{1}{2}},
\end{equation}
for $t \in [0,1]$.
The space $H^p$, $0< p \leq 2$, consists of vectors $f=(x_I)_{I \in \D}$ for which
\begin{equation}
\label{eq:hpnorm}
	\norm{f}_{H^p}=\norm{S(f)}_{L^p([0,1])} <\infty.
\end{equation}
For $1\leq p \leq 2$, (\ref{eq:hpnorm}) defines a norm and $H^p$ is a Banach space.  
For $0<p<1$, (\ref{eq:hpnorm}) defines a quasi norm and the resulting Hardy spaces $H^p$ are quasi Banach spaces, cf.\cite{MR1441252}. 
The lattice structure on the $H^p$ spaces is induced by the natural lattice structure on sequence spaces (cf.\cite{MR540367}) and therefore they are (quasi) Banach lattices over the dyadic intervals $\D$ equipped with the counting measure. They are $2$-concave with 2-concavity constant $M_{(2)}(H^p)=1$: let $x^1,\dots,x^n \in H^p$, by Minkowski's inequality for $\frac{2}{p}\geq 1$ we have
\begin{equation}
\begin{split}
\left(\sum_{i=1}^n\norm{x^i}_{H^p}^2\right)^{\frac{1}{2}}&=\left(\sum_{i=1}^n\left(\int_0^1\left(\sum_{I \in \D}\abs{x^i_I}^21_I(t)\right)^{\frac{p}{2}}dt\right)^{\frac{2}{p}}\right)^{\frac{1}{2}}\\
&\leq \left(\int_0^1\left(\sum_{i=1}^n\sum_{I\in \D}\abs{x^i_I}^21_I(t)\right)^{\frac{p}{2}}dt\right)^{\frac{1}{p}}\\
&=\norm{\left(\sum_{i=1}^n{\abs{x^i}^2}\right)^{\frac{1}{2}}}_{H^p}.
\end{split}
\end{equation}
Analogous we get that $H^p$, $0<p\leq 2$, is $p$-convex with $p$-convexity constant $M^{(p)}(H^p)=1$.
%Indeed, $H^p$, $0<p\leq 2$, is $a$-convex for all $a\leq p$ and $b$-concave for all $b \geq 2$. 

For convenience we identify $f=(x_I)_{I \in \D} \in H^p$ with its formal Haar series
	\[f=\sum_{I\in \D}{x_Ih_I}. 
\]

The \textit{Haar support} of $f$ is defined as the following collection of dyadic intervals: $\{J\in \D: x_J \neq 0\}$.
\medskip

\subsubsection*{Discrete Triebel-Lizorkin spaces $f_p^{q}$, $0< p\leq q <\infty$}
For general information on Triebel-Lizorkin spaces we refer to \cite{MR1070037}\footnote{
The Triebel-Lizorkin spaces $f_p^{q}$ are special cases of the discrete Triebel-Lizorkin spaces $f_{p}^{\alpha,q}$, defined in \cite[page 47]{MR1070037}, for the value $\alpha=-\frac{1}{2}$.  }.
Let $(x_I)_{I\in \D}$ be a real sequence. We define $f=(x_I)_{I \in \D}$ to be the real vector indexed by the dyadic intervals. We define the $q$-variation of $f$ as follows
\begin{equation}
	S_q(f)(t)=\bigg(\sum_{I \in \D}{\abs{x_I}^q 1_I(t)}\bigg)^{\frac{1}{q}},
\end{equation}for $t\in [0,1]$. 
The spaces $f_{p}^q$, $0< p\leq q< \infty$, consist of vectors $f=(x_I)_{I \in \D}$ for which
\begin{equation}
\label{eq:tlnorm}
	\norm{f}_{f_{p}^q}=\norm{S_q(f)}_{L^p([0,1])} <\infty.
\end{equation}
For $1\leq p\leq q<\infty$ (\ref{eq:tlnorm}) defines a norm, otherwise it defines only a quasi norm. Therefore, the Triebel-Lizorkin spaces $f_p^q$, $0<p\leq q<\infty$ are (quasi) Banach spaces. 

As in the case of Hardy spaces the lattice structure on the Triebel-Lizorkin spaces is induced by the natural lattice structure on sequence spaces and therefore they are (quasi) Banach lattices over the dyadic intervals equipped with the counting measure. 
\medskip

The Triebel-Lizorkin spaces $f_p^q$ are the $\frac{q}{2}$-convexification of the Hardy space $H^{\frac{2p}{q}}$, where 
$\frac{2p}{q} \in (0,2]$, meaning that $f_p^q$ can be identified with the space of all sequences $u=(x_I)_{I \in \D}$ such that $\abs{u}^{\frac{q}{2}} \in H^{\frac{2p}{q}}$ endowed with the norm $\|\abs{u}^{\frac{q}{2}}\|_{H^{\frac{2p}{q}}}^{\frac{2}{q}}$, cf.\cite{MR540367,MR871851}. The absolute value is defined by $\abs{u}^{\frac{q}{2}}=\left(\abs{x_I}^{\frac{q}{2}}\right)_{I \in \D}$ and it can be identified with its formal Haar series $\abs{u}^{\frac{q}{2}}=\sum_{I \in \D}{\abs{x_I}^{\frac{q}{2}}h_I}$. 

Recall that the spaces $H^p$, $0<p\leq 2$, are $2$-concave and $p$-convex (quasi) Banach lattices. Since $\frac{2p}{q} \in (0,2]$ the theory of convexification (\cite{MR540367,MR871851}) yields that the Triebel-Lizorkin spaces are $q$-concave and $p$-convex Banach lattices. The $q$-concavity and $p$-convexity constants $M_{(q)}(f_p^q)$ and $M^{(p)}(f_p^q)$ are equal to one. 

\subsection{Vector-valued dyadic Hardy Spaces $H^p_X$, $0< p \leq 2$} See \cite{MR2927805}.
Let $X$ be a Banach space and $(x_I)_{I\in \D}$ be a sequence in $X$. We define $f=(x_I)_{I \in \D}$ to be the $X$-valued vector indexed by the dyadic intervals. We define the square function of $f$
as follows:
\begin{equation}
\label{eq:vecsquare}
\mathbb{S}(f)(t)=\lim_{n \rightarrow \infty}\left(\int_0^1{\Bignorm{\sum_{I\in \D_n}{r_I(s)x_Ih_I(t)}}_X^2ds}\right)^{\frac{1}{2}}, \hspace{0.5 cm} t \in[0,1].
\end{equation}
where $\{r_I\}_{I \in \D}$ is an enumeration of the independent Rademacher system. 
Let $0<p\leq 2$. We say $f \in H^p_X$, if 
\begin{equation}
\label{eq:hpnormvec}
	\norm{f}_{H^p_X}=\norm{\mathbb{S}(f)}_{L^p([0,1])}<\infty. 
\end{equation}

We identify $f=(x_I)_{I \in \D}$ with its formal Haar series 
	\[f=\sum_{I \in \D}{x_Ih_I}.
\]
The \textit{Haar support} of $f$ is defined as the following collection of dyadic intervals: $\{J\in \D: x_J \neq 0\}$.

The following theorem states the decomposition of an element in $H^p_X$ into absolutely summing elements with disjoint Haar support and bounded square function. The decomposition is done by a stopping time argument that may be regarded as a constructive algorithm. The decomposition originates in the work of S. Janson and P.W. Jones \cite{MR671315} and is described, for example, in \cite{MR2157745}.

\begin{theorem}[Atomic decomposition]
\label{th:atomdec}
For all $0<p\leq 2$ there exist constants $a_p$, $A_p$ such that for every $u \in H^p_X$ with Haar expansion
	\[u=\sum_{J \in \D}{x_Jh_J}, \hspace{0.5 cm} x_J \in X
\]
there exists an index set $\mathcal{N}\subseteq \NN$ and a sequence $(\mathcal{G}_i)_{i \in \mathcal{N}}$ of blocks of dyadic intervals such that for 
	\[u_i=\sum_{J \in \mathcal{G}_i}{x_Jh_J}, \hspace{0.5 cm} i \in \mathcal{N}
\]the following holds:
\begin{enumerate}
	\item[i)]  $(\mathcal{G}_i)_{i \in \mathcal{N}}$ is a disjoint partition of $\D$.
	\item[ii)] $I_i:=\bigcup_{J \in \mathcal{G}_i}J$ is a dyadic interval and $\E:=\{I_i:i \in \N\}$ satisfies $\llbracket \mathcal{E}\rrbracket\leq 4$.
	\item[iii)] 
	\begin{align}
	\label{eq:atdec}
			a_p \norm{u}_{H^p_X}^p &\leq \sum_{i \in \mathcal{N}}{\norm{u_i}_{H^p_X}^p}\leq \sum_{i \in \mathcal{N}}{\abs{I_i}\norm{\mathbb{S}(u_i)}_{\infty}^p}\leq A_p \norm{u}_{H^p_X}^p.
	\end{align}
\end{enumerate}
The family $(u_i, \mathcal{G}_i, I_i)_{i \in \N}$ is called the atomic decomposition of $u \in H^p_X$. 
\end{theorem}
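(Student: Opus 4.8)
The plan is to establish this as the dyadic Janson--Jones atomic decomposition (\cite{MR671315}, \cite{MR2157745}), adapted to the Rademacher square function $\mathbb{S}$; the decomposition is produced by a stopping-time algorithm on $\mathbb{S}(u)$. First I would reduce to the case that $u$ is a finite Haar sum by a density and limiting argument, all constants below depending only on $p$. Then, for $k\in\mathbb{Z}$, let $\mathcal{B}_k$ be the collection of maximal dyadic intervals contained in $\{\mathbb{S}(u)>2^k\}$; these sets decrease in $k$, so $\bigcup_k\mathcal{B}_k$ carries a tree structure by inclusion. For $Q\in\mathcal{B}_k$ set
\[
\G_Q=\{J\in\D:J\subseteq Q\text{ and }J\not\subseteq Q'\text{ for any }Q'\in\mathcal{B}_{k+1}\text{ with }Q'\subsetneq Q\},\qquad u_Q=\sum_{J\in\G_Q}x_Jh_J;
\]
after deduplicating intervals occurring at several consecutive levels (retaining the finest one, at which the block is nonempty) and attaching the Haar-mass-free remainder to a topmost block, one obtains the families $(\G_i,u_i,I_i)_{i\in\N}$. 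The construction in \cite{MR2157745} refines this by applying at each selection step also a density threshold --- roughly, stopping the children of $Q$ once the portion of $Q$ still lying in $\{\mathbb{S}(u)>2^{k+1}\}$ first exceeds a fixed fraction --- which is what makes the Carleson estimate come out with the stated constant.

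Properties (i) and (ii) are then bookkeeping: each $J\in\D$ lies in exactly one $\G_Q$, namely the one attached to the smallest selected interval containing $J$; each $\G_Q$ satisfies the two block axioms, with maximal interval $Q$, so $I_Q=Q$; and $\E=\{I_i\}_{i\in\N}$ is the resulting nested tree of selected intervals. For the Carleson bound $\llbracket\E\rrbracket\le 4$ I would write $\sum_{Q\in\E,\,Q\subseteq Q_0}\abs{Q}=\int_{Q_0}N(t)\,dt$, with $N(t)$ the number of selected intervals between $t$ and $Q_0$; along the branch through $t$ the levels increase by at least one per step and stay below $\log_2\mathbb{S}(u)(t)$ (each selected interval meets the corresponding level set), so $N(t)\le 1+\log^{+}_2\bigl(\mathbb{S}(u)(t)/2^{\mathrm{lev}(Q_0)}\bigr)$, and the density rule converts the integral of this bound over $Q_0$ into a geometrically convergent sum totalling at most $4\abs{Q_0}$ --- the standard Janson--Jones bookkeeping.

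Next I would record the pointwise estimates, valid whenever $I_i$ is selected at level $k_i$: one has $\norm{\mathbb{S}(u_i)}_\infty\le 2^{k_i+1}$ and $\mathbb{S}(u)(t)>2^{k_i}$ for every $t\in I_i$. The second is just $I_i\subseteq\{\mathbb{S}(u)>2^{k_i}\}$; for the first, off the children of $I_i$ one has $\mathbb{S}(u_i)(t)\le\mathbb{S}(u)(t)\le 2^{k_i+1}$ (the conditioning/contraction principle, since $u_i$ is a sub-collection of $u$), while on a child $Q'$ the value $\mathbb{S}(u_i)(t)$ is the square function of the Haar frequencies strictly between $Q'$ and $I_i$, again $\le 2^{k_i+1}$ because the dyadic parent of $Q'$ lies outside $\{\mathbb{S}(u)>2^{k_i+1}\}$. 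Granting these, the middle inequality of (iii) is immediate --- $\mathbb{S}(u_i)$ is supported in $I_i$, so $\norm{u_i}_{H^p_X}^p=\int_{I_i}\mathbb{S}(u_i)^p\le\abs{I_i}\,\norm{\mathbb{S}(u_i)}_\infty^p$ --- and the last follows by summing over levels, using $I_i\subseteq\{\mathbb{S}(u)>2^{k_i}\}$:
\[
\sum_i\abs{I_i}\,\norm{\mathbb{S}(u_i)}_\infty^p\le C\sum_k 2^{kp}\,\abs{\{\mathbb{S}(u)>2^k\}}\le C'\,\norm{\mathbb{S}(u)}_{L^p}^p=C'\,\norm{u}_{H^p_X}^p .
\]
For the first inequality, subadditivity of $\mathbb{S}$ gives $\mathbb{S}(u)(t)\le\sum_i\mathbb{S}(u_i)(t)$; the blocks through a fixed $t$ lie on one branch, at consecutive levels, so by the bound on $\norm{\mathbb{S}(u_i)}_\infty$ this sum is a geometric series comparable to its largest term, hence $\mathbb{S}(u)(t)\le C\,2^{K(t)}$ with $K(t)$ the deepest level at $t$; on the other hand, splitting the square-function jump that triggered the deepest selection (via $\mathbb{S}(A+B)\le\mathbb{S}(A)+\mathbb{S}(B)$) shows that the deepest block or its predecessor has square function at least $2^{K(t)-1}$ at $t$, so $\sum_i\mathbb{S}(u_i)(t)^p\ge c_p\,\mathbb{S}(u)(t)^p$; integrating gives $a_p\norm{u}_{H^p_X}^p\le\sum_i\norm{u_i}_{H^p_X}^p$ (for $0<p\le 1$ this step is merely the $p$-subadditivity of $\norm{\cdot}_{H^p_X}^p$).

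I expect the real obstacle to lie not in any single estimate but in arranging the stopping rule so that the Carleson condition and the pointwise bound on $\mathbb{S}(u_i)$ hold \emph{simultaneously}: level-set stopping gives the square-function bound but fails the Carleson estimate, whereas a pure density stopping controls the Carleson constant but not the individual blocks, so the selection must interleave the two --- this is the delicate point of the Janson--Jones construction. Beyond that, the vector-valued setting only forces one to replace $L^2$-orthogonality of Haar coefficients by Kahane's inequality \eqref{eq:Kahane} and the contraction principle \eqref{eq:Kahanecon} wherever square functions of sub-collections are compared.
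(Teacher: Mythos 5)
There is a genuine gap, and it sits exactly where you park it in your final paragraph: the stopping rule. The selection you actually write down (maximal dyadic intervals of $\{\mathbb{S}(u)>2^k\}$, blocks between consecutive selected levels) does give your pointwise estimates $I_i\subseteq\{\mathbb{S}(u)>2^{k_i}\}$ and $\norm{\mathbb{S}(u_i)}_\infty\le 2^{k_i+1}$, but it provably violates property (ii), and not just with constant worse than $4$: take the scalar $u$ (depth $M$) with $x_{[0,1)}=2$, $x_J=0$ on every right child, and on every left child $L$ the coefficient chosen so that $\sum_{K\supseteq L}x_K^2=4\sum_{K\supseteq \widehat L}x_K^2$. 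Then $\inf_J S(u)$ doubles exactly when $J$ is a left child, so the selected family $\E$ consists of $[0,1)$ together with \emph{all} left halves of all dyadic intervals of the first $M$ generations, and $\sum_{J\in\E}\abs{J}=1+M/2$, so $\llbracket\E\rrbracket$ is unbounded in $M$. Consequently the Carleson-controlled stopping of Janson--Jones/\cite{MR2157745} is a genuinely different rule, and once the rule changes, everything you use for (iii) --- the containment in the level set, the $2^{k_i+1}$ bound on $\mathbb{S}(u_i)$, the consecutive-level chain structure at a point, and the ``deepest block or its predecessor is large'' step --- has to be re-proved for the new blocks. You acknowledge this interleaving as ``the delicate point'' and leave it out, but it is precisely the content of the theorem; as written, no single construction in your proposal satisfies (ii) and (iii) simultaneously. (A smaller remark: even for the level-set blocks, the predecessor claim needs the coarse part lying strictly above the predecessor to be split off and bounded by $2^{K-1}$ via the sub-collection contraction at a good point of its parent; this yields a bound like $2^{K-2}$ for the larger of the top two blocks, not $2^{K-1}$, and the one-line ``jump that triggered the deepest selection'' justification does not by itself produce it.)

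For comparison, the paper does not redo the construction either: it cites \cite{MR2157745} for the stopping time, for (i)--(ii) and for the right-hand inequalities in (\ref{eq:atdec}), notes that these transfer verbatim to the vector-valued square function, and devotes its own effort (the Appendix) to the left-hand inequality for $1<p\le 2$, which in the vector-valued case is not the one-line Pythagoras argument of the scalar case. That proof is structurally different from yours: it uses only that the $\G_i$ are disjoint blocks whose top intervals have finite Carleson constant, via the generation-decay estimate (\ref{eq:propei}) and the atoms of the sigma-algebras attached to the blocks, following \cite{MR2418798}. This robustness is not a luxury: the multiplier version (\ref{eq:atphi}), which is what Theorems \ref{th:main1} and \ref{th:main2} actually invoke, needs the left-hand inequality for $\varphi\cdot u$ with the \emph{same} blocks, and your pointwise argument cannot give that, since after multiplication by $\varphi$ the blocks are no longer adapted to $\mathbb{S}(\varphi\cdot u)$. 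So even granting the stopping rule from the literature, your route would prove (\ref{eq:atdec}) for $u$ itself but would still leave the Carleson-based argument (or some substitute) to be supplied for the way the theorem is used in the paper.
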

\begin{remark}
If we set $X=\RR$ in the above theorem, we get the atomic decomposition of $u \in H^p$. Note that in this case $a_p=1$ for all $0<p\leq 2$. 
The scalar valued decomposition procedure, particularly the inequalities (\ref{eq:atdec}) in the case $X=\RR$, can be found in \cite{MR2157745}. 
The right-hand side inequality in (\ref{eq:atdec}) transfers directly from the
scalar valued case (cf.~\cite{MR2157745}) to the vector valued case. For the left-hand side
estimate in (\ref{eq:atdec}) we have to consider two cases. In the case $0<p\leq1$ use the well known triangle inequality
	\[\norm{f+g}_{H^p_X}^p\leq \norm{f}_{H^p_X}^p+\norm{g}_{H^p_X}^p. 
\]
In the case $1<p\leq 2$ there are some differences between the scalar and the vector valued case. In the scalar valued case, the left-hand side inequality follows immediately from the disjoint decomposition of $\D$ into blocks of dyadic intervals. In the vector valued case one can adapt the proof of \cite[Lemma 3.3]{MR2418798}. We include the Appendix (Section \ref{sec:appendix}) in order to give the proof in detail.

However, note that the left-hand side inequality of (\ref{eq:atdec}) depends only on the fact that $(\G_i)_{i\in \N}$ is a sequence of disjoint blocks of dyadic intervals and that the Carleson constant of $\E$ is finite. Therefore, for $\varphi=(\varphi_I)_{I\in \D} \in \ell^{\infty}(\D)$ we have
\begin{align}
\label{eq:atphi}
\biggnorm{\sum_{I \in \D}\varphi_Ix_Ih_I}_{H^p_X}^p\leq \frac{1}{a_p}\sum_{i \in \N}\biggnorm{\sum_{I \in \G_i}\varphi_Ix_Ih_I}_{H^p_X}^p.
\end{align}
\end{remark}

\begin{remark}
Applying the atomic decomposition procedure to the function $\abs{u}^{\frac{q}{2}}=\sum_{I \in \D}\abs{u_I}^{\frac{q}{2}}h_I \in H^{\frac{2p}{q}}$, yields the atomic decomposition of $u=(x_I)_{I \in \D}\in f_p^q$, denoted by $(u_i,I_i,\G_i)$, where $u_i=(x_I)_{I \in \G_i}$ and $I_i$, $\G_i$ are as in Theorem \ref{th:atomdec}.
\end{remark}

\subsection{Maurey's Factorization Theorem}
The following theorem follows directly from Maurey's theorem (Theorem \ref{th:mau}) and from Pietsch's factorization theorem (\cite{MR1144277}).
\begin{theorem}
\label{th:vec1}
Let $X$ be a Banach space of cotype r, $2\leq r < \infty$. 
For all $s>r$ there exists a constant $K_{s,r}>0$ such that the following holds.
For all $f \in H^2_X$ with Haar expansion
	\[f=\sum_{I \in \D}{f_Ih_I}, \hspace{0.5 cm} f_I \in X,
\]there exists a $\mu \in \ell^1(\D)$ (not depending on $s$), with
	\[\mu_I \geq 0, \quad\quad\forall I \in \D
\] and
	\[\sum_{I \in \D}{\mu_I}= 1
\]such that for each $\varphi \in \ell^{\infty}(\D)$
\begin{equation}
\label{eq:stateint2}
\norm{\varphi \cdot f}_{H^2_X}\leq K_{s,r}C_r(X) \norm{f}_{H^2_X}\bigg(\sum_{I \in \D}{\abs{\varphi_I}^s\mu_I}\bigg)^{\frac{1}{s}},
\end{equation}where $C_r(X)$ is the cotype-$r$ constant of $X$ and
	\[\varphi \cdot f=\sum_{I \in \D}{\varphi_If_Ih_I}.
\]
\end{theorem}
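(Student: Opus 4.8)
The plan is to read $\varphi\mapsto\varphi\cdot f$ as the multiplication operator $\M_f:\ell^\infty(\D)\to H^2_X$ and push it through Maurey's theorem (Theorem~\ref{th:mau}) together with a Pietsch-type factorization: first a summable Pietsch weight for each fixed $s$, then, by an averaging argument, one weight $\mu$ serving all $s>r$.

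First I would record the elementary facts. Kahane's contraction principle \eqref{eq:Kahanecon}, applied inside the limit defining the square function \eqref{eq:vecsquare}, gives $\mathbb{S}(\varphi\cdot f)(t)\le\|\varphi\|_\infty\,\mathbb{S}(f)(t)$ for every $t$, hence $\M_f\in L(\ell^\infty(\D),H^2_X)$ with $\|\M_f\|=\|f\|_{H^2_X}$. Next I would check that $H^2_X$ is of cotype $r$: the assignment $f=\sum_I x_Ih_I\mapsto\big((s,t)\mapsto\sum_I r_I(s)x_Ih_I(t)\big)$ realises $H^2_X$ as a closed subspace of $L^2([0,1]^2;X)$, which by the preliminaries on cotype of Bochner spaces has cotype $\max(2,r)=r$ with $C_r\big(L^2([0,1]^2;X)\big)\le c_r\,C_r(X)$; since cotype passes to subspaces, $C_r(H^2_X)\le c_r\,C_r(X)$. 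Writing $\ell^\infty(\D)=C(\beta\D)$, Theorem~\ref{th:mau} now yields, for each $s>r$, that $\M_f\in\Pi_s(\ell^\infty(\D),H^2_X)$ with $\pi_s(\M_f)\le K^{\mathrm M}_{s,r}\,c_r\,C_r(X)\,\|f\|_{H^2_X}$.

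The second step turns the $s$-summing norm into a discrete Pietsch weight. Since the point evaluations at $I\in\D$ are among the extreme points of the unit ball of $C(\beta\D)^*=\ell^\infty(\D)^*$, the $s$-summing inequality for $\M_f$ reads
\begin{equation*}
\sum_{j}\|\M_f\varphi^{(j)}\|_{H^2_X}^s\le\pi_s(\M_f)^s\,\sup_{I\in\D}\sum_j|\varphi^{(j)}_I|^s
\end{equation*}
for all finite families $(\varphi^{(j)})\subseteq\ell^\infty(\D)$. On the weak-$*$ compact convex set $\Delta'=\{\mu\in\ell^1(\D):\mu_I\ge0,\ \sum_I\mu_I\le1\}$ consider the convex family of weak-$*$ continuous affine functionals $\mu\mapsto\|\M_f\varphi\|_{H^2_X}^s-\pi_s(\M_f)^s\sum_I|\varphi_I|^s\mu_I$, $\varphi\in\ell^\infty(\D)$; using the displayed inequality with positive (rational) weights, every convex combination of these functionals has nonpositive infimum over $\Delta'$, so a minimax argument (Ky~Fan / Sion) produces $\mu^{(s)}\in\Delta'$ with $\|\M_f\varphi\|_{H^2_X}\le\pi_s(\M_f)\big(\sum_I|\varphi_I|^s\mu^{(s)}_I\big)^{1/s}$ for every $\varphi$. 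Adding any leftover mass $1-\sum_I\mu^{(s)}_I$ to a fixed coordinate only increases the right-hand side, so we may assume $\sum_I\mu^{(s)}_I=1$. This is the part the introduction calls ``Step~2 and~3'', and it is where one must be careful: both in identifying the dual ball of $\ell^\infty(\D)$ and in running the minimax so as to land inside $\ell^1(\D)$ rather than merely among the regular Borel measures on $\beta\D$ (a raw Pietsch measure may be nonatomic and useless here).

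Finally, to remove the dependence on $s$, I would apply the previous step along $s_n=r+\tfrac1n$, obtaining $\mu^{(s_n)}$ as above, and set $\mu=\sum_n 2^{-n}\mu^{(s_n)}\in\ell^1(\D)$, which has $\mu_I\ge0$, $\sum_I\mu_I=1$, and depends only on $f$. Given any $s>r$, pick $n=n(s,r)$ with $s_n<s$; then $\mu_I\ge 2^{-n}\mu^{(s_n)}_I$ gives $\big(\sum_I|\varphi_I|^{s_n}\mu^{(s_n)}_I\big)^{1/s_n}\le 2^{n/s_n}\big(\sum_I|\varphi_I|^{s_n}\mu_I\big)^{1/s_n}$, and since $\mu$ is a probability weight the $\ell^{s_n}(\mu)$-average is dominated by the $\ell^{s}(\mu)$-average; combining with the step-two estimate at $s_n$ yields $\|\varphi\cdot f\|_{H^2_X}\le 2^{n/s_n}\pi_{s_n}(\M_f)\big(\sum_I|\varphi_I|^s\mu_I\big)^{1/s}$, which is the asserted inequality once the finite quantity $2^{n(s,r)/s_{n(s,r)}}K^{\mathrm M}_{s_{n(s,r)},r}\,c_r$ is absorbed into $K_{s,r}$. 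The main obstacle is this second step — the concrete extraction of a \emph{summable} Pietsch weight and the bookkeeping in the last step that produces a single $\mu$ at the cost of a (finite but possibly $s$-dependent) constant; the rest is a direct application of Maurey's and Pietsch's theorems together with the cotype computation for $H^2_X$.
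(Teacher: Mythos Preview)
Your argument is correct but follows a genuinely different route from the paper's. The paper sidesteps precisely the step you flag as the main obstacle---extracting a Pietsch weight that lands in $\ell^1(\D)$ rather than in the full dual of $\ell^\infty(\D)$---by first reducing to finitely supported $f$: it writes $f=\sum_{i\ge0} f_i$ with each $f_i$ of finite Haar support $D_i$ and $\|f_i\|_{H^2_X}\le 4^{-i}\|f\|_{H^2_X}$ (this is where Kwapie\'n's and Hoffmann--J\o rgensen's theorems enter, to guarantee norm convergence of the Haar partial sums in $H^2_X$), applies the textbook finite-dimensional Pietsch theorem on each $\ell^\infty(D_i)$ to obtain probabilities $\mu^i$ on $D_i$, and then glues them via $\nu_I=\sum_i 2^{-i}1_{D_i}(I)\mu^i_I$ together with a H\"older splitting of the geometric weights. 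Your route instead works directly on $\ell^\infty(\D)$ and extracts the $\ell^1$ weight by a Ky~Fan/Sion argument on the simplex $\Delta'$; this spares you the decomposition of $f$, but the minimax needs the small additional care of restricting first to finitely supported $\varphi$ (so that $\mu\mapsto\sum_I|\varphi_I|^s\mu_I$ is weak-$*$ continuous on $\Delta'\subset (c_0(\D))^*$) and then passing to general $\varphi$ by monotone convergence of $\mathbb S(\varphi^{(n)}\cdot f)$. A point in favour of your approach: your final averaging $\mu=\sum_n 2^{-n}\mu^{(s_n)}$ explicitly manufactures a single weight valid for every $s>r$, whereas in the paper's proof the Pietsch measures $\mu^i$---and hence the assembled $\nu$---are produced for one fixed $s$, so the clause ``not depending on $s$'' in the statement is not actually verified there.
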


\begin{proof}[Proof]
We obtain from Kahane's inequality that $H_{X}^2$ is of cotype $r$. Let $f \in H^2_X$. 
We consider the multiplication operator
\begin{align*}
\M_{f}:\ell^{\infty}(\D) &\rightarrow H^2_X, \varphi \mapsto \varphi \cdot f.
\end{align*}
From Kahane's contraction principle we get $\mathbb{S}(\varphi \cdot f)(t)\leq \sup_{I \in \D}{\abs{\varphi_I}} \mathbb{S}(f)(t).$
Hence, $\norm{\M_{f}}\leq \norm{f}_{H^2_X}$.

\textbf{Step 1}: 
We assume that $f$ has finite Haar support $\D'\subset \D$. Then, applying Maurey's theorem (Theorem \ref{th:mau}) to the multiplication operator $\M_f \in L(\ell^{\infty}(\D'),H^2_{X})$ we obtain that $\M_f$ is $s$-summing for all $s>r$ and
\begin{equation}
\label{eq:ssum}
\pi_{s}(\M_f)\leq  K_{s,r}C_{r}(X)\norm{\M_f}\leq K_{s,r}C_{r}(X)\norm{f}_{H^2_X}.
\end{equation}

Since $\D'$ is finite, we can apply Pietsch's factorization theorem \cite[Theorem III.F.8.]{MR1144277} and get the following. There exists a $\mu \in \ell^1(\D')$ with $\mu_I \geq 0$ for all $I \in \D'$
and $\sum_{I \in \D'}{\mu_I}=1$ such that for each $\varphi \in \ell^{\infty}(\D)$
\begin{equation*}
%\label{eq:stateint3}
\norm{\M_f(\varphi)}_{H^2_X}\leq \pi_s(\M_f)\bigg(\sum_{I \in \D'}{\abs{\varphi_I}^s\mu_I}\bigg)^{\frac{1}{s}}.
\end{equation*}

\textbf{Step 2}: 
Let $f \in H^2_X$ with its formal Haar series $f=\sum_{I \in \D}x_Ih_I$, where the partial sums are listed according to the lexicographic order in $\D$. 
The conditional expectation $\mathbb{E}_N$ with respect to the $\sigma$-algebra generated by the set $\{I \in \D:\abs{I}=2^{-N}\}$ is given by
\[\mathbb{E}_N(f)=\sum_{I \in \D_{N-1}}x_Ih_I.\]
Then we have 
\begin{equation}
\label{eq:convergence1}
f=\mathbb{E}_N(f)+f-\mathbb{E}_N(f)
\end{equation}
and for each $\varepsilon >0$ there exists $N(\varepsilon) \in \NN$ such that for all $N \geq N(\varepsilon)$
\begin{equation}
\label{eq:convergence}
\norm{\mathbb{E}_N(f)-f}_{H^2_X}\leq \varepsilon.
\end{equation}
To confirm equation (\ref{eq:convergence}) we exploit the finite cotype of $X$ and invoke Kwapien's theorem (\cite[p.255]{MR1342297}) and Hoffmann-J\o rgensen's theorem (\cite[Theorem 12.3]{MR1342297}). 
We consider $f=(x_I)_{I \in \D}$ with its formal Haar series $f=\sum_{I \in \D}x_Ih_I$ and
define a sequence of independent, symmetric random variables with values in $L^2_X([0,1],dt)$. Let $I \in \D$ and  
\begin{equation}
R_I:[0,1] \rightarrow L^2_X, \quad s \mapsto \Big(t \mapsto x_Ih_I(t)r_I(s)\Big),
\end{equation}where $r_I$ is an enumeration of the Rademacher system. Since $f=\sum_{I\in D}x_I h_I \in H^2_X$ we have that
\[\sup_{N\in \NN}\left(\int_0^1 \int_0^1\biggnorm{\sum_{I \in \D_N}x_Ih_I(t)r_I(s)}_X^2dtds\right)^{\frac{1}{2}}<\infty.\]
Therefore, the partial sums of random variables $\sum_{I \in \D_N}R_I$ are bounded in $L^2_Z([0,1],ds)$, where $Z=L^2_X([0,1],dt)$.
$X$ has finite cotype, hence $L^2_X$ has finite cotype and Kwapien's theorem yields that there exists a limit of the partial sums given by
\[
R(s):=\lim_{N \rightarrow \infty}\sum_{I \in \D_N}R_I(s), \quad \text{a.e.}\]
Hoffmann-J\o rgensen's theorem asserts that the partial sums $\sum_{I \in \D_N}R_I$ converge in $L^2_Z([0,1],ds)$, i.e.
\[\lim_{N \rightarrow \infty}\biggnorm{R-\sum_{I \in \D_N}R_I}_{L^2_Z}=0,\]
and hence (\ref{eq:convergence}) holds. 
Note that we identified $f=(x_I)_{I \in \D}$ with the convergent series $R=\sum_{I \in \D}x_Ih_I\otimes r_I \in L^2_Z$, where $Z=L^2_X$.
\medskip

Iterating (\ref{eq:convergence1}) and (\ref{eq:convergence}) there exists a monotonically increasing sequence $(N_i)_{i\geq 0}$ of natural numbers and a sequence $(f_i)_{i\geq 0}$ in $H^2_X$ given by
\begin{equation}
\label{eq:fi}
f_0=\mathbb{E}_{N_0}(f),\quad
f_i=\mathbb{E}_{N_{i}}(f)-\mathbb{E}_{N_{i-1}}(f),\,  i\geq 1
\end{equation}
such that
\begin{equation}
\label{eq:finorm}
\norm{f_i}_{H^2_X}\leq 4^{-i}\norm{f}_{H^2_X}.
\end{equation}
From the construction of the sequence $(f_i)_{i \geq 0}$ we get that each $f_i$ has finite Haar support $D_i \subset \D$.

\textbf{Step 3}: 
We apply Step 1 to the sequence $(f_i)_{i \geq 0}$. There exists a sequence $(\mu^i)_{i \geq 0}$, $\mu^i \in \ell^1(D_i)$ with $\mu_I^i \geq 0$ for all $I \in D_i$
and $\sum_{I \in D_i}{\mu_I^i}=1$ such that for each $\varphi \in \ell^{\infty}(\D)$
\begin{equation}
\label{eq:desi}
\norm{\M_{f_i}(\varphi)}_{H^2_X}\leq \pi_s(\M_{f_i})\bigg(\sum_{I \in D_i}{\abs{\varphi_I}^s\mu_I^i}\bigg)^{\frac{1}{s}}. 
\end{equation}

Step 2 yields $f=\sum_{i=0}^{\infty}f_i$. Therefore,
\begin{align*}
\norm{\M_f(\varphi)}_{H^2_X}\leq \sum_{i=0}^{\infty}{\norm{\M_{f_i}(\varphi)}_{H^2_X}}\leq\sum_{i=0}^{\infty} \pi_s(\M_{f_i})\bigg(\sum_{I \in D_i}{\abs{\varphi_I}^s\mu_I^i}\bigg)^{\frac{1}{s}}. 
\end{align*}
Using (\ref{eq:ssum}) and (\ref{eq:finorm}) yields
\begin{align*}
\norm{\M_f(\varphi)}_{H^2_X}&\leq K_{s,r}C_{r}(X)\sum_{i=0}^{\infty}\norm{f_i}_{H^2_X}\bigg(\sum_{I \in D_i}{\abs{\varphi_I}^s\mu_I^i}\bigg)^{\frac{1}{s}}\\
&\leq K_{s,r}C_{r}(X)\norm{f}_{H^2_X}\sum_{i=0}^{\infty}4^{-i}\bigg(\sum_{I \in D_i}{\abs{\varphi_I}^s\mu_I^i}\bigg)^{\frac{1}{s}}.
\end{align*}
Let $\frac{1}{s}+\frac{1}{s'}=1$. H\"older's inequality yields
\begin{align*}
\norm{\M_f(\varphi)}_{H^2_X}
&\leq K_{s,r}C_{r}(X)\norm{f}_{H^2_X}\bigg(\sum_{i=0}^{\infty}2^{-is'}\bigg)^{\frac{1}{s'}}\bigg(\sum_{i=0}^{\infty}2^{-is}\sum_{I \in D_i}{\abs{\varphi_I}^s\mu_I^i}\bigg)^{\frac{1}{s}}\\
&\leq \overline{K}_{s,r}C_r(X)\norm{f}_{H^2_X}\bigg(\sum_{i=0}^{\infty}2^{-i}\sum_{I \in D_i}{\abs{\varphi_I}^s\mu_I^i}\bigg)^{\frac{1}{s}}\\
&=\overline{K}_{s,r}C_r(X)\norm{f}_{H^2_X}\bigg(\sum_{I \in \D}\abs{\varphi_I}^s\sum_{i=0}^{\infty}2^{-i}{1_{D_i}(I)\mu_I^i}\bigg)^{\frac{1}{s}}\\
&=\overline{K}_{s,r}C_r(X)\norm{f}_{H^2_X}\bigg(\sum_{I \in \D}\abs{\varphi_I}^s\nu_I\bigg)^{\frac{1}{s}},
\end{align*}where $\nu_I=\sum 2^{-i}{1_{D_i}(I)\mu_I^i}$ satisfies $\nu_I \geq 0$ for all $I \in \D$ and
\[\sum_{I \in \D}{\nu_I}=\sum_{i=0}^{\infty}2^{-i}\sum_{I \in D_i}{\mu_I^i}=\sum_{i=0}^{\infty}2^{-i}=2.\]
\end{proof}

\begin{remark}
The proof above has a direct extension to $H^p_X$, $1\leq p < \infty $, where again the measure $\mu \in \ell^1(\D)$ has its origin in the abstract version of Pietsch's theorem.  
\end{remark}
\begin{remark}
\label{re:q2}
If $X$ is of cotype 2 then we know from Theorem \ref{th:dpr} that the statement of Theorem \ref{th:vec1} is valid for $s\geq 2$. Especially it is valid for $s=2$. 
\end{remark}

\section{The main Theorem}
\label{sec:mainth}
We investigate multiplication operators acting on the Haar system in the Hardy spaces $H^p$, $0<p \leq 2$. We fix $u \in H^p$ with Haar expansion $u=\sum x_Ih_I$ and define 
	$\M_u:\ell^{\infty}(\D) \rightarrow H^p$
by
\begin{equation}
\M_u(\varphi)=\sum_{I \in \D}{\varphi_Ix_Ih_I}.	
\end{equation}
We frequently use the ''lattice convention'' 
\begin{equation}
		\varphi\cdot u=\M_u(\varphi)
\end{equation}
to emphasize that $\varphi \in \ell^{\infty}(\D)$ is acting as a multiplier on $u\in H^p$. Since the Haar basis is 1-unconditional in $H^p$ we have
\begin{equation}
	\norm{\M_u:\ell^{\infty}(\D)\rightarrow H^p}\leq \norm{u}_{H^p}. 
\end{equation}Note that $H^p$, $0<p\leq 2$, is $2$-concave as a sequence space. Hence, our multiplication operator is $2$-summing, see (\ref{eq:pisarg}), and has a Pietsch measure. Precisely, by Step 2 and 3 in the proof of Theorem \ref{th:vec1} this measure is given as follows. There exists a nonnegative sequence $\omega_I$, $I \in \D$ so that $\sum\omega_I\leq 1$ and for all $\varphi \in \ell^{\infty}(\D)$
\begin{equation}
\label{eq:firsttry}
 \norm{\M_u(\varphi)}_{H^p}\leq \norm{u}_{H^p} \left(\sum_{I \in \D}\abs{\varphi_I}^{2}\omega_I\right)^{\frac{1}{2}}.
\end{equation}
Since $\M_u$ is determined by $u \in H^p$, also the Pietsch measure $\left(\omega_I\right)_{I \in \D}$ is given by $u \in H^p$. Note however that the existence of Pietsch measures comes from an application of the Hahn-Banach theorem and therefore Pietsch measures are not given constructively.

\subsection{Construction of Pietsch measures}
The construction of the Pietsch measure is the contribution of our paper: for multipliers ranging in the Hardy spaces $H^p$, we are able to find explicit formulae for the Pietsch measure $(\omega_I)_{I\in \D}$. The input for our construction is the atomic decomposition of $u \in H^p$, Theorem \ref{th:atomdec}. The output is the equation (\ref{eq:omega}) determining $\omega_I$ explicitly. 

Recall that Theorem \ref{th:atomdec} asserts that $u=\sum{x_Ih_I} \in H^p$, $0<p\leq 2$ admits an atomic decomposition, that is a triple $(u_i, \G_i, I_i)_{i \in \N}$ so that
	\[u=\sum_{i \in \N}u_i  \quad \quad \text{ and }\quad\quad u_i=\sum_{I \in \G_i}x_Ih_I 
\]satisfy (\ref{eq:atdec}), explicitly this means 
	\begin{align*}
			\norm{u}_{H^p}^p &\leq \sum_{i \in \mathcal{N}}{\norm{u_i}_{H^p}^p}\leq \sum_{i \in \mathcal{N}}{\abs{I_i}\norm{\mathbb{S}(u_i)}_{\infty}^p}\leq A_p \norm{u}_{H^p}^p.
		\end{align*}

\begin{theorem}
\label{th:main1}
Let $0 <p \leq 2$. Let $u \in H^p$ with Haar expansion
	\[u=\sum_{I \in \D}{x_Ih_I}
\]and atomic decomposition $(u_i, \G_i, I_i)_{i \in \N}$. Then the sequence $\left(\omega_I\right)_{I \in \D}$, defined by
\begin{equation}
\label{eq:omega}
\omega_I=\frac{1}{A_p}\frac{\abs{I_i}^{1-\frac{p}{2}}}{\norm{u_i}_2^{2-p}}\frac{\abs{x_I}^2\abs{I}}{\norm{u}_{H^p}^p}, \hspace{0.5 cm} I \in \G_i,
\end{equation}satisfies

\begin{equation}
	\sum_{I \in \D}{\omega_I}\leq 1
\end{equation}and there exists a constant $C_p>0$ such that for each $\varphi \in \ell^{\infty}(\D)$
\begin{equation}
\label{eq:mainth}
	\norm{\varphi \cdot u}_{H^p}\leq C_p \|u\|_{H^p}\Big(\sum_{I \in \D}{\abs{\varphi_I}^2 \omega_I}\Big)^{\frac{1}{2}}.
\end{equation}
\end{theorem}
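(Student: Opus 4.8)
The plan is to feed the atomic decomposition $(u_i,\G_i,I_i)_{i\in\N}$ of $u$ into the quasi-triangle estimate (\ref{eq:atphi}) and then apply H\"older's inequality twice: once on each atom, exploiting that $u_i$ is supported on the fixed dyadic interval $I_i$, to pass from the $H^p$-norm to the $H^2$-norm, and once across the index $i$ to split off the normalising factors appearing in (\ref{eq:omega}). Throughout one uses $\norm{u_i}_2^2=\sum_{I\in\G_i}x_I^2\abs{I}=\int_{I_i}S(u_i)^2$, the last equality because every $I\in\G_i$ satisfies $I\subseteq I_i$. We may assume $u\neq0$; moreover the indices $I$ with $x_I=0$ contribute nothing to either side of (\ref{eq:mainth}) or to $\sum_I\omega_I$, so with the convention $\omega_I=0$ there we restrict attention to the $i$ with $u_i\neq0$, which makes (\ref{eq:omega}) well defined.

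For the bound $\sum_I\omega_I\le1$: summing (\ref{eq:omega}) over $I\in\G_i$ gives $\sum_{I\in\G_i}\omega_I=\frac{1}{A_p\norm{u}_{H^p}^p}\abs{I_i}^{1-\frac{p}{2}}\norm{u_i}_2^{p}$. Since $\norm{u_i}_2^2=\int_{I_i}S(u_i)^2\le\abs{I_i}\norm{S(u_i)}_\infty^2$, this is at most $\frac{1}{A_p\norm{u}_{H^p}^p}\abs{I_i}\norm{S(u_i)}_\infty^p$; summing over $i$ and invoking the rightmost inequality in (\ref{eq:atdec}) (with $X=\RR$, $a_p=1$) yields $\sum_I\omega_I\le1$.

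For (\ref{eq:mainth}), first apply (\ref{eq:atphi}) with $X=\RR$ to get $\norm{\varphi\cdot u}_{H^p}^p\le\sum_i\norm{\varphi\cdot u_i}_{H^p}^p$. Fix $i$ and set $a_i:=\norm{\varphi\cdot u_i}_{H^2}^2=\sum_{I\in\G_i}\varphi_I^2x_I^2\abs{I}$. Because $S(\varphi\cdot u_i)$ is supported on $I_i$, H\"older's inequality (trivial when $p=2$) gives $\norm{\varphi\cdot u_i}_{H^p}\le\abs{I_i}^{1/p-1/2}\norm{\varphi\cdot u_i}_{H^2}$, hence $\norm{\varphi\cdot u_i}_{H^p}^p\le\abs{I_i}^{1-\frac{p}{2}}a_i^{\frac{p}{2}}$. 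Now factor
\[
\abs{I_i}^{1-\frac{p}{2}}a_i^{\frac{p}{2}}=\Big(\frac{\abs{I_i}^{1-\frac{p}{2}}}{\norm{u_i}_2^{2-p}}\,a_i\Big)^{\frac{p}{2}}\Big(\abs{I_i}^{1-\frac{p}{2}}\norm{u_i}_2^{p}\Big)^{1-\frac{p}{2}}
\]
and sum over $i$, applying H\"older with exponents $2/p$ and $2/(2-p)$. By (\ref{eq:omega}) the first factor then equals $\big(A_p\norm{u}_{H^p}^p\sum_I\varphi_I^2\omega_I\big)^{p/2}$, and by the estimate of the previous paragraph the second factor is at most $\big(A_p\norm{u}_{H^p}^p\big)^{1-p/2}$. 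Multiplying, the powers of $\norm{u}_{H^p}$ add up to $p$, giving $\norm{\varphi\cdot u}_{H^p}^p\le A_p\norm{u}_{H^p}^p\big(\sum_I\varphi_I^2\omega_I\big)^{p/2}$, i.e. (\ref{eq:mainth}) with $C_p=A_p^{1/p}$.

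The computations are all elementary; the one thing that has to be got right is the factorisation displayed above. It must be simultaneously compatible with (\ref{eq:omega}), so that the first H\"older factor reproduces $\sum_I\varphi_I^2\omega_I$ up to the explicit constants, and with the atomic bound $\sum_i\abs{I_i}\norm{S(u_i)}_\infty^p\le A_p\norm{u}_{H^p}^p$, so that the second factor is controlled; the bookkeeping of exponents then forces the powers of $\norm{u}_{H^p}$ to telescope to exactly $p$. For $p=2$ the statement degenerates (no division by $\norm{u_i}_2$ occurs) and is immediate, and the argument above is uniform in $0<p\le2$. I do not expect any genuinely hard step.
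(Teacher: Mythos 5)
Your proof is correct and follows essentially the same route as the paper: reduce via the atomic decomposition and the quasi-triangle inequality (\ref{eq:atphi}), pass from $H^p$ to $H^2$ on each atom using its support in $I_i$, and then apply H\"older with exponents $\frac{2}{p}$, $\frac{2}{2-p}$ across $i$ with exactly the factorisation that makes the first factor reproduce $\sum_I\varphi_I^2\omega_I$ and the second factor controlled by (\ref{eq:pr2}). The weight bound $\sum_I\omega_I\le 1$ is also argued as in the paper, so there is nothing to add.
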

\begin{proof}
From
	\[	\norm{u}_{H^p}^p \leq \sum_{i \in \mathcal{N}}{\norm{u_i}_{H^p}^p}
\]we get the estimate
\begin{equation}
\label{eq:pr1}
	\norm{u}_{H^p}^{p}\leq \sum_{i \in \N}{\norm{u_i}_2^p\abs{I_i}^{1-\frac{p}{2}}}.
\end{equation}
We get from 
	\[ \sum_{i \in \mathcal{N}}{\abs{I_i}\norm{S(u_i)}_{\infty}^p}\leq A_p \norm{u}_{H^p}^p
\]that
\begin{equation}
\label{eq:pr2}
	\sum_{i \in \N}{\norm{u_i}_2^p\abs{I_i}^{1-\frac{p}{2}}}\leq A_p\norm{u}_{H^p}^p.
\end{equation}
This follows from
	\[\norm{u_i}_{2}^p\abs{I_i}^{1-\frac{p}{2}}\leq \norm{Su_i}_{\infty}^p\abs{I_i}.
\]
By (\ref{eq:pr1}) and the remark following Theorem \ref{th:atomdec} we get for $\varphi \in \ell^{\infty}(\D)$
\begin{equation}
	\begin{split}
	\biggnorm{\sum_{I \in \D}{\varphi_I x_I h_I}}_{H^p}^p &=\biggnorm{\sum_{i \in \N}\sum_{I \in \G_i}{\varphi_I x_I h_I}}_{H^p}^p\\
	& \leq \sum_{i \in \N}{\biggnorm{\sum_{I \in \G_i}{\varphi_I x_I h_I}}_{2}^p\abs{I_i}^{1-\frac{p}{2}}}\\
	&= \sum_{i \in \N}{\biggnorm{\sum_{I \in \G_i}{\varphi_I \frac{x_I}{\norm{u_i}_2} h_I}}_{2}^p\norm{u_i}_2^p\abs{I_i}^{1-\frac{p}{2}}}.
	\end{split}
\end{equation}
With
\begin{align*}
\biggnorm{\sum_{I \in \G_i}{\varphi_I \frac{x_I}{\norm{u_i}_2} h_I}}_{2}^p &= \Big(\sum_{I \in \G_i}{\varphi_I^2\frac{x_I^2}{\norm{u_i}_2^2}\abs{I}}\Big)^{\frac{p}{2}}
\end{align*}
we get
	\[\biggnorm{\sum_{I \in \D}{\varphi_I x_I h_I}}_{H^p}^p \leq \sum_{i \in \N}{\Big(\sum_{I \in \G_i}{\varphi_I^2\frac{x_I^2}{\norm{u_i}_2^2}\abs{I}}\Big)^{\frac{p}{2}}\norm{u_i}_2^p\abs{I_i}^{1-\frac{p}{2}}}.
\]
Applying H\"older's inequality with $\frac{p}{2}+1-\frac{p}{2}=1$ to 

	\[\sum_{i \in \N}{\Big(\sum_{I \in \G_i}{\varphi_I^2\frac{x_I^2}{\norm{u_i}_2^2}\abs{I}\norm{u_i}_2^p\abs{I_i}^{1-\frac{p}{2}}}\Big)^{\frac{p}{2}}\Big(\norm{u_i}_2^p\abs{I_i}^{1-\frac{p}{2}}\Big)^{1-\frac{p}{2}}}.
\]
we get
	\[\biggnorm{\sum_{I \in \D}{\varphi_I x_I h_I}}_{H^p}^p \leq \Big(\sum_{i \in \N}\sum_{I \in \G_i}{\varphi_I^2\frac{x_I^2}{\norm{u_i}_2^{2-p}}\abs{I}\abs{I_i}^{1-\frac{p}{2}}}\Big)^{\frac{p}{2}}\Big(\sum_{i \in \N}\norm{u_i}_2^p\abs{I_i}^{1-\frac{p}{2}}\Big)^{1-\frac{p}{2}}.
\]
Applying (\ref{eq:pr2}) to the second term on the right-hand side we obtain the estimate
\begin{align*}
\biggnorm{\sum_{I \in \D}{\varphi_I x_I h_I}}_{H^p}^p &\leq A_p^{1-\frac{p}{2}}\norm{u}_{H^p}^{p(1-\frac{p}{2})}\Big(\sum_{i \in \N}\sum_{I \in \G_i}{\varphi_I^2\frac{x_I^2}{\norm{u_i}_2^{2-p}}\abs{I}\abs{I_i}^{1-\frac{p}{2}}}\Big)^{\frac{p}{2}}\\
&= A_p\norm{u}_{H^p}^{p}\Big(\sum_{i \in \N}\sum_{I \in \G_i}{\varphi_I^2\frac{x_I^2}{\norm{u_i}_2^{2-p}\norm{u}_{H^p}^pA_p}\abs{I}\abs{I_i}^{1-\frac{p}{2}}}\Big)^{\frac{p}{2}}.
\end{align*}
Recall
\begin{equation}
\label{eq:pr3}
\norm{u_i}_2^2=\sum_{I \in \G_i}{x_I^2\abs{I}}.	
\end{equation}
By (\ref{eq:pr2}) and (\ref{eq:pr3}) we obtain for the sequence $\left(\omega_I\right)_{I \in \D}$, defined by
	\[\omega_I=\frac{\abs{I_i}^{1-\frac{p}{2}}}{A_p\norm{u}_{H^p}^p}\frac{\abs{I}x_I^2}{\norm{u_i}_2^{2-p}}, \hspace{0.5 cm} I \in \G_i,
\]the following estimate
\begin{align*}
\sum_{I \in \D}{\omega_I}&=\frac{1}{A_p\norm{u}_{H^p}^p}\sum_{i \in \N}\sum_{I \in \G_i}{\frac{\abs{I_i}^{1-\frac{p}{2}}\abs{I}x_I^2}{\norm{u_i}_2^{2-p}}}\\
&=\frac{1}{A_p\norm{u}_{H^p}^p}\sum_{i \in \N}{\abs{I_i}^{1-\frac{p}{2}}\norm{u_i}_2^{p}}\\
&\leq 1.
\end{align*}
\end{proof}

\subsection{Extension to Triebel-Lizorkin spaces} 
We can extend the construction to the Triebel-Lizorkin spaces  $f_p^q$, $0 < p \leq q < \infty$. Recall that $f_p^q$ is the $\frac{q}{2}$-convexification of $H^{\frac{2p}{q}}$, where $\frac{2p}{q} \in (0,2]$, with
\begin{equation}
\label{eq:tr3}
	\norm{u}_{f_p^q}=\norm{\abs{u}^{\frac{q}{2}}}_{H^{\frac{2p}{q}}}^{\frac{2}{q}}.
\end{equation}
Therefore, we have for $\varphi \in \ell^{\infty}(\D)$
\begin{equation}
\label{eq:tr4}
\norm{\varphi\cdot u}_{f_p^q}=\norm{\abs{\varphi}^{\frac{q}{2}}\cdot \abs{u}^{\frac{q}{2}}}_{H^{\frac{2p}{q}}}^{\frac{2}{q}}. 
\end{equation}
Therefore, we get from (\ref{eq:firsttry}), (\ref{eq:tr3}) and (\ref{eq:tr4}) the following statement:

Let $0< p \leq q <\infty$. For all $u \in f_p^q$ with $u=(x_I)_{I \in \D}$ there exists a non negative sequence $\omega_I$, $I \in \D$ so that $\sum{\omega_I}\leq 1$ and for all $\varphi \in \ell^{\infty}$
\begin{equation}
\label{eq:firsttry2} 
\norm{\varphi\cdot u}_{f_p^q}\leq \norm{u}_{f_p^q} \left(\sum_{I \in \D}\abs{\varphi_I}^{q}\omega_I\right)^{\frac{1}{q}}.
\end{equation}where
	\[\varphi \cdot u=(\varphi_Ix_I)_{I \in \D}.
\]

We are now able to give an explicit formula for $\left(\omega_I\right)_{I \in \D}$ using again  (\ref{eq:tr3}) and (\ref{eq:tr4}). 
Let $(u_i,\G_i,I_i)_{i \in \N}$ be the atomic decomposition of $u=(x_I)_{I \in \D} \in f_p^q$. Then $(\abs{u_i}^{\frac{q}{2}},\G_i,I_i)_{i \in \N}$ is the atomic decomposition of $\abs{u}^{\frac{q}{2}} \in H^{\frac{2p}{q}}$. Theorem \ref{th:main1} asserts that there exists a constant $C_{\frac{2p}{q}}$ such that
\begin{equation}
\label{eq:tr2}
	\norm{\abs{\varphi}^{\frac{q}{2}}\cdot \abs{u}^{\frac{q}{2}}}_{H^{\frac{2p}{q}}} \leq C_{\frac{2p}{q}} \Big(\sum_{I \in \D}{\abs{\varphi_I}^{q} \omega_I}\Big)^{\frac{1}{2}}\norm{\abs{u}^{\frac{q}{2}}}_{H^{\frac{2p}{q}}},
\end{equation}
where
\begin{equation*}
\omega_I=\frac{1}{A_{\frac{2p}{q}}}\frac{\abs{I_i}^{1-\frac{p}{q}}}{\norm{\abs{u_i}^{\frac{q}{2}}}_2^{2-\frac{2p}{q}}}\frac{\abs{x_I}^q\abs{I}}{\norm{\abs{u}^{\frac{q}{2}}}_{H^{\frac{2p}{q}}}^{\frac{2p}{q}}}, \hspace{0.5 cm} I \in \G_i. 
\end{equation*}
Summarizing we get the following statement for Triebel-Lizorkin spaces as corollary of Theorem \ref{th:main1}:

\begin{corollary}
\label{co:triebel}
Let $0 <p\leq q < \infty$. Let $u=(x_I)_{I \in \D} \in f_p^q$ with atomic decomposition $(u_i, \G_i, I_i)_{i \in \N}$. Then the sequence $\left(\omega_I\right)_{I \in \D}$, defined by
\begin{equation}
\label{eq:mtriebel1}
\omega_I=\frac{1}{A_{\frac{2p}{q}}}\frac{\abs{I_i}^{1-\frac{p}{q}}}{\norm{u_i}_{f_q^q}^{q-p}}\frac{\abs{x_I}^q\abs{I}}{\norm{u}_{f_p^q}^p}, \hspace{0.5 cm} I \in \G_i,
\end{equation}satisfies

\begin{equation}
\label{eq:mtriebel2}
	\sum_{I \in \D}{\omega_I}\leq 1
\end{equation}and there exists a constant $C_{p,q}>0$ such that for each $\varphi \in \ell^{\infty}(\D)$
\begin{equation}
\|\varphi \cdot u\|_{f_p^q}\leq C_{p,q} \|u\|_{f_p^q}\Big(\sum_{I \in \D}{\abs{\varphi_I}^q \omega_I}\Big)^{\frac{1}{q}}.
\end{equation}
\end{corollary}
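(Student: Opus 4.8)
The plan is to deduce Corollary~\ref{co:triebel} from Theorem~\ref{th:main1} by the standard convexification device that relates $f_p^q$ to $H^{2p/q}$. Write $r=\frac{2p}{q}\in(0,2]$, so that by definition $\norm{v}_{f_p^q}=\norm{\abs{v}^{q/2}}_{H^r}^{2/q}$ for every $v=(v_I)_{I\in\D}$, and in particular $\norm{\varphi\cdot u}_{f_p^q}=\norm{\abs{\varphi}^{q/2}\cdot\abs{u}^{q/2}}_{H^r}^{2/q}$, where the products are taken coordinatewise. The first step is to record that if $(u_i,\G_i,I_i)_{i\in\N}$ is the atomic decomposition of $u\in f_p^q$ — which, by the Remark following Theorem~\ref{th:atomdec}, is obtained by running the scalar atomic decomposition on $\abs{u}^{q/2}\in H^r$ — then $(\abs{u_i}^{q/2},\G_i,I_i)_{i\in\N}$ is exactly the atomic decomposition of $\abs{u}^{q/2}$ in $H^r$, with the same blocks $\G_i$ and the same intervals $I_i$.

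Next I would apply Theorem~\ref{th:main1} to the function $\abs{u}^{q/2}\in H^r$ with multiplier $\psi:=\abs{\varphi}^{q/2}\in\ell^\infty(\D)$. That theorem produces the weight
\begin{equation*}
\widetilde\omega_I=\frac{1}{A_r}\frac{\abs{I_i}^{1-\frac{r}{2}}}{\norm{\abs{u_i}^{q/2}}_2^{2-r}}\frac{\bigl(\abs{x_I}^{q/2}\bigr)^2\abs{I}}{\norm{\abs{u}^{q/2}}_{H^r}^{r}},\qquad I\in\G_i,
\end{equation*}
with $\sum_I\widetilde\omega_I\le 1$ and $\norm{\psi\cdot\abs{u}^{q/2}}_{H^r}\le C_r\norm{\abs{u}^{q/2}}_{H^r}\bigl(\sum_I\abs{\psi_I}^2\widetilde\omega_I\bigr)^{1/2}$. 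Now I substitute $r=\frac{2p}{q}$: then $1-\frac r2=1-\frac pq$, $2-r=\frac{2}{q}(q-p)$, $\bigl(\abs{x_I}^{q/2}\bigr)^2=\abs{x_I}^q$, and, using the convexification identities once more, $\norm{\abs{u_i}^{q/2}}_2^{2-r}=\bigl(\norm{\abs{u_i}^{q/2}}_2^{2/q}\bigr)^{q-p}=\norm{u_i}_{f_q^q}^{q-p}$ (since $f_q^q$ is the $\frac q2$-convexification of $H^1=H^{2q/q}$, i.e. $\norm{v}_{f_q^q}=\norm{\abs{v}^{q/2}}_{L^1}^{2/q}$ and $\norm{\cdot}_{L^1}=\norm{\cdot}_2$ on the square-function side gives $\norm{\abs{u_i}^{q/2}}_{H^1}=\norm{\abs{u_i}^{q/2}}_2$), and likewise $\norm{\abs{u}^{q/2}}_{H^r}^{r}=\norm{u}_{f_p^q}^p$. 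Hence $\widetilde\omega_I$ is precisely the $\omega_I$ displayed in \eqref{eq:mtriebel1}, and \eqref{eq:mtriebel2} is immediate from $\sum_I\widetilde\omega_I\le 1$.

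Finally I raise the $H^r$-estimate to the power $\frac 2q$: since $\abs{\psi_I}^2=\abs{\varphi_I}^q$, we get
\begin{equation*}
\norm{\varphi\cdot u}_{f_p^q}=\norm{\psi\cdot\abs{u}^{q/2}}_{H^r}^{2/q}\le C_r^{2/q}\,\norm{\abs{u}^{q/2}}_{H^r}^{2/q}\Bigl(\sum_{I\in\D}\abs{\varphi_I}^q\omega_I\Bigr)^{1/q}=C_{p,q}\norm{u}_{f_p^q}\Bigl(\sum_{I\in\D}\abs{\varphi_I}^q\omega_I\Bigr)^{1/q}
\end{equation*}
with $C_{p,q}=C_{2p/q}^{2/q}$, which is the asserted inequality. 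The only genuinely delicate point is bookkeeping: one must check carefully that the atomic decomposition of $u$ in $f_p^q$ and that of $\abs{u}^{q/2}$ in $H^r$ share the same combinatorial data $(\G_i,I_i)$ — this is exactly the content of the Remark after Theorem~\ref{th:atomdec} — and that each exponent transforms correctly under $r\mapsto\frac{2p}{q}$ and under the $\frac q2$-convexification; there is no analytic obstacle beyond these substitutions.
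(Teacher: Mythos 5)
Your proposal is correct and follows essentially the same route as the paper: convexify, apply Theorem \ref{th:main1} to $\abs{u}^{q/2}\in H^{2p/q}$ with the multiplier $\abs{\varphi}^{q/2}$ and the shared atomic data $(\G_i,I_i)$, then translate the weight and the estimate back via the norm identities and the power $\tfrac{2}{q}$. Only a minor slip in your parenthetical: $f_q^q$ is the $\tfrac{q}{2}$-convexification of $H^{2}$ (since $\tfrac{2q}{q}=2$), not of $H^1$, so the identity you actually need and use is $\norm{u_i}_{f_q^q}=\norm{\abs{u_i}^{q/2}}_{2}^{2/q}$, which holds by direct computation and does not affect the argument.
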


\subsection{Extension to vector-valued Hardy spaces}
Let $X$ be a Banach space. Fix a sequence $(x_I)_{I \in \D}$ in $X$, then for $u=\sum_{I \in \D}x_Ih_I \in H^p_X$ we define the multiplication operator 
	\[\M_u:\ell^{\infty}(\D) \rightarrow \overline{\spann}\{x_Ih_I:I \in \D\}\subseteq H^p_X,
\] by
	\[\M_u(\varphi)=\sum_{I \in \D}\varphi_Ix_Ih_I.
\]
By Kahane's contraction principle (\ref{eq:Kahanecon}), for fixed $(x_I)_{I \in \D}$, the sequence $(x_Ih_I)_{I \in \D}$ is an unconditional basic sequence in $H^p_X$, $0<p\leq 2$. 
This remark links the present work on vector-valued $H^p_X$ spaces with the lattices of the previous sections. 

As an application of Theorem \ref{th:vec1} and the atomic decomposition in $H^p_X$ we obtain the following statement. The atomic decomposition works as extrapolation tool, transferring the Pietsch measure of multiplication operators into $H^2_X$ to Pietsch measures for multiplication operators into $H^p_X$, $0<p<2$. 

Here the result is only partially constructive. The formulae for the Pietsch measures for atoms are obtained by applying Theorem \ref{th:vec1} invoking the abstract version of Pietsch's theorem. 

\begin{theorem}
\label{th:main2}
Let $X$ be a Banach space of cotype $r$, $2\leq r < \infty$. 
Let $0 <p \leq 2$ and let $u \in H^p_X$ with Haar expansion
	\[u=\sum_{I \in \D}{x_Ih_I}
\]and atomic decomposition $(u_i, \G_i, I_i)_{i \in \N}$ (see Theorem \ref{th:atomdec}). Then for each $i \in \N$ there exists a $\mu^{(i)} \in \ell^1(\G_i)$ with
\[\mu^{(i)}_I \geq 0, \forall I \in \G_i
\] and
	\[\sum_{I \in \G_i}{\mu^{(i)}_I}=1
\]so that the following holds: The sequence $\left(\omega_I\right)_{I \in \D}$, defined by
\begin{equation}
\omega_I=\frac{\norm{u_i}_{H^2_X}^p\abs{I_i}^{1-\frac{p}{2}}}{A_p\norm{u}_{H^p_X}^p}\mu^{(i)}_I, \hspace{0.5 cm} I \in \G_i, 
\end{equation} satisfies
\begin{equation}
	\sum_{I \in \D}{\omega_I}\leq 1
\end{equation}and for all $s>r$ there exists a constant $K_{s,r,p}>0$ such that for each $\varphi \in \ell^{\infty}(\D)$
\begin{equation}
\label{eq:vecresult}
	\norm{\varphi \cdot u}_{H^p_X}\leq K_{s,r,p}C_r(X)\norm{u}_{H^p_X} \bigg(\sum_{I \in \D}{\abs{\varphi_I}^s \omega_I}\bigg)^{\frac{1}{s}}.
\end{equation}
\end{theorem}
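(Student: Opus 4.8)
The plan is to mimic the structure of the proof of Theorem~\ref{th:main1}, replacing the elementary $L^2$-estimates on each atom by the abstract Pietsch measure supplied by Theorem~\ref{th:vec1} (in the vector-valued setting). The starting point is the atomic decomposition $u=\sum_{i\in\N}u_i$ with $u_i=\sum_{I\in\G_i}x_Ih_I$ and the inequality \eqref{eq:atphi}, which gives, for $\varphi\in\ell^\infty(\D)$,
\[
\biggnorm{\sum_{I\in\D}\varphi_Ix_Ih_I}_{H^p_X}^p\leq\frac{1}{a_p}\sum_{i\in\N}\biggnorm{\sum_{I\in\G_i}\varphi_Ix_Ih_I}_{H^p_X}^p.
\]
Since each block $\G_i$ is finite in Haar support only in the limit, one first passes to $u_i\in H^2_X$ after renormalising: view $u_i/\norm{u_i}_{H^2_X}$ as an element of $H^2_X$ with Haar support $\G_i$, and apply Theorem~\ref{th:vec1} to it. This produces, for each $i$, a probability weight $\mu^{(i)}\in\ell^1(\G_i)$ (coming from Pietsch's theorem — here is where constructivity is lost) such that
\[
\biggnorm{\sum_{I\in\G_i}\varphi_Ix_Ih_I}_{H^2_X}\leq K_{s,r}C_r(X)\norm{u_i}_{H^2_X}\Bigl(\sum_{I\in\G_i}\abs{\varphi_I}^s\mu^{(i)}_I\Bigr)^{1/s}.
\]

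The next step is to convert the $H^2_X$-norm of each atomic piece into its $H^p_X$-norm at the cost of the factor $\abs{I_i}^{1-p/2}$, exactly as in \eqref{eq:pr1}: since $\mathbb{S}(u_i)$ and $\mathbb{S}(\varphi\cdot u_i)$ are supported in $I_i$, H\"older's inequality in $L^p(I_i)$ versus $L^2(I_i)$ yields
\[
\biggnorm{\sum_{I\in\G_i}\varphi_Ix_Ih_I}_{H^p_X}\leq \abs{I_i}^{\frac1p-\frac12}\biggnorm{\sum_{I\in\G_i}\varphi_Ix_Ih_I}_{H^2_X}.
\]
Combining the three displays gives
\[
\biggnorm{\varphi\cdot u}_{H^p_X}^p\leq\frac{(K_{s,r}C_r(X))^p}{a_p}\sum_{i\in\N}\norm{u_i}_{H^2_X}^p\abs{I_i}^{1-\frac p2}\Bigl(\sum_{I\in\G_i}\abs{\varphi_I}^s\mu^{(i)}_I\Bigr)^{p/s}.
\]

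Now apply H\"older's inequality in $i$ with exponents $\frac sp$ and its conjugate, splitting $\norm{u_i}_{H^2_X}^p\abs{I_i}^{1-p/2}=\bigl(\norm{u_i}_{H^2_X}^p\abs{I_i}^{1-p/2}\bigr)^{p/s}\cdot\bigl(\norm{u_i}_{H^2_X}^p\abs{I_i}^{1-p/2}\bigr)^{1-p/s}$. The second factor sums, by the right-hand inequality of \eqref{eq:atdec} together with $\norm{u_i}_{H^2_X}^p\abs{I_i}^{1-p/2}\le\norm{\mathbb{S}(u_i)}_\infty^p\abs{I_i}$, to a quantity bounded by $A_p\norm{u}_{H^p_X}^p$; the first factor assembles into the single sum $\sum_i\sum_{I\in\G_i}\abs{\varphi_I}^s\,\norm{u_i}_{H^2_X}^p\abs{I_i}^{1-p/2}\mu^{(i)}_I$. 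Reading off the coefficient of $\abs{\varphi_I}^s$ and dividing by the normalising constant $A_p\norm{u}_{H^p_X}^p$ gives precisely the claimed $\omega_I$; the computation $\sum_{I\in\D}\omega_I=\frac{1}{A_p\norm{u}_{H^p_X}^p}\sum_i\norm{u_i}_{H^2_X}^p\abs{I_i}^{1-p/2}\sum_{I\in\G_i}\mu^{(i)}_I=\frac{1}{A_p\norm{u}_{H^p_X}^p}\sum_i\norm{u_i}_{H^2_X}^p\abs{I_i}^{1-p/2}\le1$ uses $\sum_{I\in\G_i}\mu^{(i)}_I=1$ and \eqref{eq:pr2}. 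Absorbing $a_p$, $A_p$ and the Maurey constant into $K_{s,r,p}$ finishes the estimate \eqref{eq:vecresult}.

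The only genuinely delicate point is the passage to $H^2_X$ for each atom: Theorem~\ref{th:vec1} is stated for $f\in H^2_X$, and one must check that the restriction $u_i=\sum_{I\in\G_i}x_Ih_I$, with $\mathbb{S}(u_i)$ bounded and supported in the dyadic interval $I_i$, indeed lies in $H^2_X$ and that the multiplication operator $\M_{u_i}$ acts as the theorem requires on the coordinates indexed by $\G_i$ (trivially extending $\varphi$ by $0$ off $\G_i$ does not change $\M_{u_i}(\varphi)$). Since $\norm{u_i}_{H^2_X}\le\abs{I_i}^{1/2}\norm{\mathbb{S}(u_i)}_\infty<\infty$, this is immediate, and the approximation argument of Step~2 in the proof of Theorem~\ref{th:vec1} already internalises the reduction to finite Haar support. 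Beyond that, everything is bookkeeping with H\"older's inequality and the two inequalities of \eqref{eq:atdec}, exactly parallel to the scalar proof of Theorem~\ref{th:main1}.
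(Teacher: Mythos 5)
Your proposal is correct and follows essentially the same route as the paper's proof: atomic decomposition plus \eqref{eq:atphi}, the H\"older step $\norm{\cdot}_{H^p_X}\le\abs{I_i}^{1/p-1/2}\norm{\cdot}_{H^2_X}$ on each block, Theorem \ref{th:vec1} applied to each atom $u_i$ to obtain $\mu^{(i)}$, then H\"older in $i$ with exponent $s/p$ and the bound \eqref{eq:prv2} to produce and normalize $\omega_I$. The only cosmetic difference is your explicit check that $u_i\in H^2_X$, which the paper leaves implicit.
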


We point out that the exponent $s$ in (\ref{eq:vecresult}) is determined by the cotype of $X$ alone. In particular it is not depending on $0<p\leq 2$. 

\begin{proof} Recall the inequalities in (\ref{eq:atdec}) from the atomic decomposition in Theorem \ref{th:atomdec}. 
We have with
	\[\norm{u}_{H^p_X}^p \leq \frac{1}{a_p}\sum_{i \in \mathcal{N}}{\norm{u_i}_{H^p_X}^p}
\]the following estimate
\begin{equation}
\label{eq:prv1}
	\norm{u}_{H^p_X}^{p}\leq  \frac{1}{a_p}\sum_{i \in \N}{\norm{u_i}_{H^2_X}^p\abs{I_i}^{1-\frac{p}{2}}}.
\end{equation}
We also get from 
	\[ \sum_{i \in \mathcal{N}}{\abs{I_i}\norm{\mathbb{S}(u_i)}_{\infty}^p}\leq A_p \norm{u}_{H^p_X}^p
\]that
\begin{equation}
\label{eq:prv2}
	\sum_{i \in \N}{\norm{u_i}_{H^2_X}^p\abs{I_i}^{1-\frac{p}{2}}}\leq A_p\norm{u}_{H^p_X}^p.
\end{equation}
This follows from
	\[\norm{u_i}_{H^2_X}^p\abs{I_i}^{1-\frac{p}{2}}\leq \norm{\mathbb{S}u_i}_{\infty}^p\abs{I_i}.
\]
By (\ref{eq:prv1}) and the remark following Theorem \ref{th:atomdec} we get for $\varphi \in \ell^{\infty}(\D)$
\begin{equation}
\label{eq:prv3}
	\begin{split}
	\biggnorm{\sum_{I \in \D}{\varphi_I x_I h_I}}_{H^p_X}^p &=\biggnorm{\sum_{i \in \N}\sum_{I \in \G_i}{\varphi_I x_I h_I}}_{H^p_X}^p\\
	& \leq \frac{1}{a_p}\sum_{i \in \N}{\biggnorm{\sum_{I \in \G_i}{\varphi_I x_I h_I}}_{H^2_X}^p\abs{I_i}^{1-\frac{p}{2}}}\\
	&= \frac{1}{a_p}\sum_{i \in \N}{\biggnorm{\sum_{I \in \G_i}{\varphi_I \frac{x_I}{\norm{u_i}_{H^2_X}} h_I}}_{H^2_X}^p\norm{u_i}_{H^2_X}^p\abs{I_i}^{1-\frac{p}{2}}}.
	\end{split}
\end{equation}
We apply Theorem \ref{th:vec1} with the specification $f=u_i$, recall that $u_i=\sum_{I \in \G_i}x_Ih_I$. Therefore, we obtain the following statement: 
For all $s>r$ there exists a constant $K_{s,r}$ such that
	\[\biggnorm{\sum_{I \in \G_i}{\varphi_I \frac{x_I}{\norm{u_i}_{H^2_X}} h_I}}_{H^2_X}^p \leq  K_{s,r}^p C_r(X)^p \bigg(\sum_{I \in \G_i}{\abs{\varphi_I}^s\mu^{(i)}_I}\bigg)^{\frac{p}{s}},
\]where $\mu_i \in \ell^1(\G_i)$ with $\mu^{(i)}_I \geq 0$ for all $I \in \D$ and $\sum_{I \in \G_i}{\mu^{(i)}_I}=1$.
Summing up we get from Maurey's theorem (in particular Theorem \ref{th:vec1}) the following statement

\begin{align}
\label{eq:helpvec1}
\biggnorm{\sum_{I \in \D}{\varphi_I x_I h_I}}_{H^p_X}^p &\leq \frac{K_{s,r}^p C_r(X)^p}{a_p} \sum_{i \in \N}{\bigg(\sum_{I \in \G_i}{\abs{\varphi_I}^s\mu^{(i)}_I}\bigg)^{\frac{p}{s}}\norm{u_i}_{H^2_X}^p\abs{I_i}^{1-\frac{p}{2}}}.
\end{align}

We rewrite the sum on the right-hand side in an appropriate way and apply H\"older's inequality with $\frac{p}{s}+1-\frac{p}{s}=1$:
\begin{equation}
\begin{split}
\label{eq:helpvec2}
\sum_{i \in \N}&{\bigg(\sum_{I \in \G_i}{\abs{\varphi_I}^s\mu^{(i)}_I}\bigg)^{\frac{p}{s}}\norm{u_i}_{H^2_X}^p\abs{I_i}^{1-\frac{p}{2}}}\\
&= \sum_{i \in \N}{\bigg(\sum_{I \in \G_i}{\abs{\varphi_I}^s \norm{u_i}_{H^2_X}^p\abs{I_i}^{1-\frac{p}{2}}\mu^{(i)}_I}\bigg)^{\frac{p}{s}}\bigg(\norm{u_i}_{H^2_X}^p\abs{I_i}^{1-\frac{p}{2}}\bigg)^{1-\frac{p}{s}}}\\
&\leq  {\bigg(\sum_{i \in \N}\sum_{I \in \G_i}{\abs{\varphi_I}^s \norm{u_i}_{H^2_X}^p\abs{I_i}^{1-\frac{p}{2}}\mu^{(i)}_I}\bigg)^{\frac{p}{s}}\bigg(\sum_{i \in \N}\norm{u_i}_{H^2_X}^p\abs{I_i}^{1-\frac{p}{2}}\bigg)^{1-\frac{p}{s}}}.
\end{split}
\end{equation}
By (\ref{eq:prv2}) we get an estimate for the second term on the right-hand side and therefore
\begin{equation}
\begin{split}
\label{eq:helpvec3}
\sum_{i \in \N}&{\bigg(\sum_{I \in \G_i}{\abs{\varphi_I}^s\mu^{(i)}_I}\bigg)^{\frac{p}{s}}\norm{u_i}_{H^2_X}^p\abs{I_i}^{1-\frac{p}{2}}}\\
&\leq  \bigg(\sum_{i \in \N}\sum_{I \in \G_i}{\abs{\varphi_I}^s \norm{u_i}_{H^2_X}^p\abs{I_i}^{1-\frac{p}{2}}\mu^{(i)}_I}\bigg)^{\frac{p}{s}}A_p^{1-\frac{p}{s}} \norm{u}_{H^p_X}^{p(1-\frac{p}{s})}\\
&=A_p \norm{u}_{H^p_X}^{p}\bigg(\sum_{i \in \N}\sum_{I \in \G_i}{\abs{\varphi_I}^s \frac{\norm{u_i}_{H^2_X}^p\abs{I_i}^{1-\frac{p}{2}}}{A_p\norm{u}_{H^p_X}^p}\mu^{(i)}_I}\bigg)^{\frac{p}{s}}.
\end{split}
\end{equation}
Combining (\ref{eq:helpvec1}) and (\ref{eq:helpvec3}) yields
\begin{align*}
\biggnorm{\sum_{I \in \D}{\varphi_I x_I h_I}}_{H^p_X}
&\leq K_{s,r,p} C_r(X)\norm{u}_{H^p_X}\bigg(\sum_{i \in \N}\sum_{I \in \G_i}{\abs{\varphi_I}^s \frac{\norm{u_i}_{H^2_X}^p\abs{I_i}^{1-\frac{p}{2}}}{A_p\norm{u}_{H^p_X}^p}\mu^{(i)}_I}\bigg)^{\frac{1}{s}},
\end{align*}where $K_{s,r,p}$ is dependent on the constant $K_{s,r}$ from Theorem \ref{th:vec1} and the constants $a_p, A_p$ from the atomic decomposition (Theorem \ref{th:atomdec}). 
Now we set
\begin{equation}
	\label{eq:mes}
	\omega_I=\frac{\norm{u_i}_{H^2_X}^p\abs{I_i}^{1-\frac{p}{2}}}{A_p\norm{u}_{H^p_X}^p}\mu^{(i)}_I, \hspace{0.5 cm}I \in \G_i 
\end{equation}and obtain
	\[\biggnorm{\sum_{I \in \D}{\varphi_I x_I h_I}}_{H^p_X} \leq C_r(X)K_{s,r,p}\norm{u}_{H^p_X}\bigg(\sum_{I \in \D}{\abs{\varphi_I}^s \omega_I}\bigg)^{\frac{1}{s}}.
\]
The sequence $(\omega_I)_{I\in \D}$, defined by (\ref{eq:mes}), satisfies
\begin{align*}
\sum_{I \in \D}{\omega_I}=\frac{1}{A_p\norm{u}_{H^p_X}^p}\sum_{i \in \N}\norm{u_i}_{H^2_X}^p\abs{I_i}^{1-\frac{p}{2}}\sum_{I \in \G_i}{\mu^{(i)}_I}.
\end{align*}
Recall that $\mu_i$ is a probability measure on $\G_i$, thus we get from (\ref{eq:prv2})
\begin{align*}
\sum_{I \in \D}{\omega_I}&=\frac{1}{A_p\norm{u}_{H^p_X}^p}\sum_{i \in \N}\norm{u_i}_{H^2_X}^p\abs{I_i}^{1-\frac{p}{2}}\\
&\leq 1.
\end{align*}
\end{proof}
\begin{remark}
If $X$ is of cotype $2$ then the statement of Theorem \ref{th:main2} is valid for all $s\geq 2$ including $s=2$, cf. Theorem \ref{th:dpr} and the remark following Theorem \ref{th:vec1}.
\end{remark}

\section{Application to Pisier's extrapolation lattices}
\label{sec:application}
Fix $0<\theta<1$ and $1<q<\infty$. Define $p,r$ as follows
	\[\frac{1}{p}=1-\theta+\frac{\theta}{q} \quad\quad \text{ and } \quad\quad \frac{1}{r}=\frac{\theta}{q}.
\]
Let $X$ be a lattice over a measure space $(\Omega, \Sigma, \mu)$ which we assume $r$-concave and $p$-convex (with constants one). Let $X_1=L^q(\Omega,\Sigma,\mu)$. The lattice $X_0\subseteq L^0(\Omega,\Sigma,\mu)$ introduced by Pisier in \cite{MR557371} and \cite{MR555306} is defined by putting $x\in X_0$ iff
\begin{equation}
\label{eq:pisierlattice}
\norm{x}_{X_0}=\sup\left\{\norm{\abs{x}^{1-\theta}\abs{y}^{\theta}}_{X}^{\frac{1}{1-\theta}}: \norm{y}_{X_1} \leq 1\right\}< \infty.
\end{equation}
Pisier's theorem, \cite{MR557371}, \cite{MR555306}, asserts that $X_0$ is a Banach lattice and
\begin{equation}
\label{eq:cproduct}
	X=(X_0)^{1-\theta}(X_1)^{\theta}. 
\end{equation}
The lattice $(X_0)^{1-\theta}(X_1)^{\theta}$ is called Calder\'{o}n product of the Banach lattices $X_0$, $X_1$ and is defined as follows (cf.\cite{MR0167830}).  
The lattice $(X_0)^{1-\theta}(X_1)^{\theta}$ is the space of those functions $u \in L^0(\Omega,\Sigma,\mu)$ such that $\abs{u}=\abs{x}^{1-\theta}\abs{y}^{\theta}$ with $x \in X_0$ and $y \in X_1$ equipped with the norm
\begin{equation}
\label{eq:intcal}
\norm{u}_{(X_0)^{1-\theta}(X_1)^{\theta}}=\inf\{\norm{x}_{X_0}^{1-\theta}\norm{y}_{X_1}^{\theta}:\abs{u}=\abs{x}^{1-\theta}\abs{y}^{\theta}\}.
\end{equation}

Specifically, (\ref{eq:cproduct}) states that given $u \in X$ there is $y\in X_1$ and $x\in X_0$ so that 
\begin{equation}
\label{eq:pisierassertion}
	\abs{u}=\abs{x}^{1-\theta}\abs{y}^{\theta} \quad\quad \text{ and } \quad\quad \norm{x}_{X_0}^{1-\theta}\norm{y}_{X_1}^{\theta}\leq C\norm{u}_X. 
\end{equation}
The proof in \cite{MR557371} obtains $y\in X_1$ (and hence $x \in X_0$) by a Hahn-Banach argument, as explained in the third paragraph of the introduction. 
For specific examples of lattices it may however be possible to obtain $y\in X_1$ constructively. 

These considerations were the stimulus for our work on multiplication operators into Hardy spaces $H^p$ (real-valued and vector-valued) and into Triebel-Lizorkin spaces. Recall, the Triebel-Lizorkin spaces $f_p^q$, $1< p \leq q<\infty$, are $p$-convex and $q$-concave Banach lattices over the dyadic intervals equipped with the counting measure. Indeed, $f_p^q$ is $r$-concave for all $r \geq q$. 
M.Frazier and B.Jawerth showed in \cite{MR1070037} that $f_p^q \simeq (f_1^q)^{1-\theta}(f_q^q)^{\theta}$, where  $0<\theta<1$ and $\frac{1}{p}=1-\theta+\frac{\theta}{q}$.
Moreover, Pisier's theorem (\cite[Theorem 2.10, Remark 2.13]{MR557371}) asserts that 
\begin{equation}
f_p^q\simeq(X_0)^{1-\theta}(f_q^q)^{\theta},
\label{eq:intpisier1}
\end{equation}where $X_0$ is the lattice of all elements $f=(x_I)_{I \in \D}$ for which 
%$(\abs{x_I}^{1-\theta}\abs{y_I}^{\theta})_{I \in \D} \in f_p^q$ whenever $y=(y_I)_{I \in \D} \in f_q^q$. The norm on $X$ is defined as 
\begin{equation}
\norm{f}_{X_0}=\sup\left\{\norm{(\abs{x_I}^{1-\theta}\abs{y_I}^{\theta})_{I\in \D}}_{f_p^q}^{\frac{1}{1-\theta}}: y=(y_I)_{I \in \D} \in f_q^q, \norm{y}_{f_q^q}\leq 1\right\}<\infty.  
\label{eq:intpisier2}
\end{equation}
As we pointed out, the factorization of $u \in f_p^q$ as
\begin{equation}
\label{eq:triebelassertion}
	\abs{u}=\abs{x}^{1-\theta}\abs{y}^{\theta}, \quad \norm{x}_{X_0}^{1-\theta}\norm{y}_{f_q^q}^{\theta}\leq C_{p,q}\norm{u}_{f_p^q}
\end{equation} uses a Pietsch measure for the multiplication operator 
\begin{equation}
\label{eq:mult}
	\M_u:\ell^{\infty}(\D)\rightarrow f_p^q,\quad \varphi \mapsto (\varphi\cdot u).
\end{equation}
Using our explicit formulas for the Pietsch measure of (\ref{eq:mult}), we obtain the decomposition (\ref{eq:triebelassertion}) constructively without invoking Hahn Banach theorems.
\begin{theorem}
\label{th:app}
Let $1< p\leq q < \infty$ and $\theta=\frac{q}{q-1}\frac{p-1}{p}.$
%. \frac{q'}{p'}$, where $p'$ resp. $q'$ is the H\"{o}lder conjugate of $p$ resp. $q$.
Let $X_0$ be the Banach lattice defined by (\ref{eq:intpisier2}). Let $u=(u_I)_{I \in \D} \in f_p^q$ and let $\omega \in \ell^1(\D)$ be the weight defined by (\ref{eq:mtriebel1}). Then $y \in f_q^q$ and $x\in X_0$ defined by
	\[y=\left(\frac{\omega_I}{\abs{I}}\right)_{I \in \D}^{\frac{1}{q}}\quad\quad \text{ and }\quad\quad x=\left(\abs{u_I}\abs{y_I}^{-\theta}\right)^{\frac{1}{1-\theta}}_{I \in \D}
\]
satisfy
\begin{equation}
	\abs{u}=\abs{x}^{1-\theta}\abs{y}^{\theta} \quad\quad \text{ and }\quad\quad \norm{x}_{X_0}^{1-\theta}\norm{y}_{f_q^q}^{\theta} \leq C_{p,q}\norm{u}_{f_p^q}.
\end{equation}
\end{theorem}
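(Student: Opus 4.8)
The plan is to verify both assertions of the theorem directly from the defining formulas for $x$ and $y$, using Corollary \ref{co:triebel} as the only substantial input; the crucial point is that the Pietsch weight $\omega$ produced there is \emph{the same} weight out of which $y$ is built. Throughout one may assume $u\neq 0$ (otherwise all quantities vanish) and $0<\theta<1$, the standing hypothesis of this section. First I would record two elementary facts. Since $\abs{y_I}^q=\omega_I/\abs{I}$, the explicit formula (\ref{eq:mtriebel1}) shows $\omega_I=0$ precisely when $u_I=0$, hence $y_I=0\iff u_I=0$; adopting the convention $x_I:=0$ whenever $u_I=0$, the definition $\abs{x_I}^{1-\theta}=\abs{u_I}\abs{y_I}^{-\theta}$ then gives $\abs{x_I}^{1-\theta}\abs{y_I}^{\theta}=\abs{u_I}$ at every $I$ with $u_I\neq 0$ and $0=0$ elsewhere, i.e.\ the factorization identity $\abs{u}=\abs{x}^{1-\theta}\abs{y}^{\theta}$. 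Secondly, the definition of the $f_q^q$-norm yields
\[
\norm{y}_{f_q^q}^q=\sum_{I\in\D}\abs{y_I}^q\abs{I}=\sum_{I\in\D}\omega_I\leq 1
\]
by (\ref{eq:mtriebel2}), so $\norm{y}_{f_q^q}\leq 1$.

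The substance of the proof is the bound $\norm{x}_{X_0}^{1-\theta}\leq C_{p,q}\norm{u}_{f_p^q}$, for which I would unwind the definition (\ref{eq:intpisier2}) of $X_0$. Let $z=(z_I)_{I\in\D}\in f_q^q$ with $\norm{z}_{f_q^q}\leq 1$; truncating $z$ to the finite families $\D_n$ only increases the $f_p^q$-norm of $(\abs{x_I}^{1-\theta}\abs{z_I}^{\theta})_I$ (monotone convergence) and does not increase $\norm{z}_{f_q^q}$, so I may assume $z$ is finitely supported. Define $\varphi_I:=\big(\abs{z_I}/\abs{y_I}\big)^{\theta}$ where $y_I\neq 0$ and $\varphi_I:=0$ otherwise; this $\varphi$ has finite support, so $\varphi\in\ell^{\infty}(\D)$. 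Using $\abs{x_I}^{1-\theta}=\abs{u_I}\abs{y_I}^{-\theta}$ one checks $\abs{\varphi_I u_I}=\abs{u_I}\big(\abs{z_I}/\abs{y_I}\big)^{\theta}=\abs{x_I}^{1-\theta}\abs{z_I}^{\theta}$ wherever $u_I\neq 0$, while both sides vanish where $u_I=0$; hence $\varphi\cdot u$ and $(\abs{x_I}^{1-\theta}\abs{z_I}^{\theta})_I$ have equal coordinatewise absolute values and therefore equal $f_p^q$-norms. Corollary \ref{co:triebel} then gives
\[
\norm{(\abs{x_I}^{1-\theta}\abs{z_I}^{\theta})_I}_{f_p^q}=\norm{\varphi\cdot u}_{f_p^q}\leq C_{p,q}\norm{u}_{f_p^q}\Big(\sum_{I\in\D}\abs{\varphi_I}^q\omega_I\Big)^{\frac1q}.
\]

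It remains to see that $\sum_{I}\abs{\varphi_I}^q\omega_I\leq 1$, and this is exactly where the shape of $\omega$ pays off. Substituting $\abs{\varphi_I}^q=\big(\abs{z_I}/\abs{y_I}\big)^{q\theta}$ and $\omega_I=\abs{y_I}^q\abs{I}$ and splitting $\abs{I}=\abs{I}^{\theta}\abs{I}^{1-\theta}$,
\[
\sum_{I\in\D}\abs{\varphi_I}^q\omega_I=\sum_{I\in\D}\big(\abs{z_I}^q\abs{I}\big)^{\theta}\big(\abs{y_I}^q\abs{I}\big)^{1-\theta}\leq\Big(\sum_{I\in\D}\abs{z_I}^q\abs{I}\Big)^{\theta}\Big(\sum_{I\in\D}\abs{y_I}^q\abs{I}\Big)^{1-\theta}=\norm{z}_{f_q^q}^{q\theta}\,\norm{y}_{f_q^q}^{q(1-\theta)}\leq 1
\]
by H\"older's inequality with the conjugate exponents $\tfrac1\theta$ and $\tfrac1{1-\theta}$, together with $\norm{z}_{f_q^q}\leq 1$ and $\norm{y}_{f_q^q}\leq 1$. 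Thus $\norm{(\abs{x_I}^{1-\theta}\abs{z_I}^{\theta})_I}_{f_p^q}\leq C_{p,q}\norm{u}_{f_p^q}$ for every admissible $z$; taking the supremum over $z$ and raising to the power $\tfrac1{1-\theta}$ gives $\norm{x}_{X_0}\leq\big(C_{p,q}\norm{u}_{f_p^q}\big)^{1/(1-\theta)}$, i.e.\ $\norm{x}_{X_0}^{1-\theta}\leq C_{p,q}\norm{u}_{f_p^q}$. Combining this with $\norm{y}_{f_q^q}\leq 1$ yields $\norm{x}_{X_0}^{1-\theta}\norm{y}_{f_q^q}^{\theta}\leq C_{p,q}\norm{u}_{f_p^q}$.

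Since Corollary \ref{co:triebel} carries the analytic weight, I do not anticipate a genuine obstacle: the only places that need care are the bookkeeping at coordinates with $u_I=0$ (absorbed by the convention $x_I=0$ together with $\omega_I=y_I=0$ there) and the requirement $\varphi\in\ell^{\infty}(\D)$ in Corollary \ref{co:triebel} (handled by first reducing the supremum defining $\norm{x}_{X_0}$ to finitely supported competitors $z$). The one genuinely structural step is the H\"older splitting with exponents $\tfrac1\theta,\tfrac1{1-\theta}$ against $\omega_I=\abs{y_I}^q\abs{I}$, which is precisely why the weight $\omega$ was chosen as in (\ref{eq:mtriebel1}).
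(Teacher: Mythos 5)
Your proposal is correct and follows essentially the same route as the paper's proof: test the $X_0$-norm of $x$ against $z$ with $\norm{z}_{f_q^q}\leq 1$, apply Corollary \ref{co:triebel} to the multiplier $\varphi_I=\abs{y_I}^{-\theta}\abs{z_I}^{\theta}$, and show $\sum_I\abs{\varphi_I}^q\omega_I\leq 1$ using $\abs{y_I}^q=\omega_I/\abs{I}$, with the norm bound for $y$ coming from $\sum_I\omega_I\leq 1$. The only (immaterial) difference is the H\"older step — you split $\abs{I}=\abs{I}^{\theta}\abs{I}^{1-\theta}$ with exponents $\tfrac1\theta,\tfrac1{1-\theta}$, while the paper raises the exponent from $q\theta$ to $\tfrac{p(q-1)}{p-1}\theta=q$ against the sub-probability weight $\omega$ — and your extra bookkeeping (zero coordinates, reduction to finitely supported $z$ to keep $\varphi\in\ell^{\infty}(\D)$) is sound and, if anything, slightly more careful than the paper's.
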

\begin{proof}
We have from Corollary \ref{co:triebel} that
\begin{align*}
\norm{y}_{f_q^q}^q
&=\sum_{I \in \D}{\omega_I}\leq 1. 
\end{align*}
Let $z=(z_I)_{I \in \D} \in f_q^q$ with $\norm{z} \leq 1$.
Since $\frac{p(q-1)}{p-1}\geq q$ we have from H\"{o}lder's inequality
\begin{equation}
\label{eq:help1}
	\left(\sum_{I\in \D}{\abs{y_I}^{-q\theta}\abs{z_I}^{q\theta}\omega_I}\right)^{\frac{1}{q}}\leq \left(\sum_{I\in \D}{\abs{y_I}^{-\frac{p(q-1)}{p-1}\theta}\abs{z_I}^{\frac{p(q-1)}{p-1}\theta}\omega_I}\right)^{\frac{p-1}{p(q-1)}}.
\end{equation}
Using the definition of $y$ and $\theta$ we get 
\begin{equation}
\label{eq:help2}
\begin{split}
\sum_{I\in \D}{\abs{y_I}^{-\frac{p(q-1)}{p-1}\theta}\abs{z_I}^{\frac{p(q-1)}{p-1}\theta}\omega_I}&=\sum_{I\in \D}{\abs{\omega_I}^{-\frac{p(q-1)}{q(p-1)}\theta}\abs{I}^{\frac{p(q-1)}{q(p-1)}\theta}\abs{z_I}^{\frac{p(q-1)}{p-1}\theta}\omega_I}\\
&=\sum_{I\in \D}{\abs{z_I}^{q}\abs{I}}=\norm{z}_{f_q^q}^q\leq 1.
\end{split}
\end{equation}
Therefore, combining (\ref{eq:help1}) and (\ref{eq:help2})
\begin{equation}
\label{eq:help3}
	\left(\sum_{I\in \D}{\abs{y_I}^{-q\theta}\abs{z_I}^{q\theta}\omega_I}\right)^{\frac{1}{q}}\leq 1. 
\end{equation}
We can apply Corollary \ref{co:triebel} to the sequence $\varphi_I=\abs{y_I}^{-\theta}\abs{z_I}^{\theta}$ and get with (\ref{eq:help3})

\begin{equation}
\label{eq:help4}
\begin{split}
\norm{\abs{u}\abs{y}^{-\theta}\abs{z}^{\theta}}_{f_p^q}&\leq C_{p,q}\norm{u}_{f_p^q}\left(\sum_{I \in \D}{\abs{y_I}^{- q\theta}\abs{z_I}^{ q\theta}}\omega_I\right)^{\frac{1}{q}}\\
&\leq C_{p,q}\norm{u}_{f_p^q}.
\end{split}
\end{equation}
Recall that $x=(x_I)_{I \in \D}$ with $x_I=\left(\abs{u_I}\abs{y_I}^{-\theta}\right)^{\frac{1}{1-\theta}}$. Then invoking (\ref{eq:pisierlattice}), the defining equation for the norm in $X_0$, the estimate (\ref{eq:help4}) translates into
\[\norm{x}_{X_0}^{1-\theta}\leq C_{p,q} \norm{u}_{f_p^q}.\] 
As $\norm{y}_{f_q^q}\leq 1$ we have
\begin{align*}
\norm{x}_{X_0}^{1-\theta}\norm{y}_{f_q^q}^{\theta}\leq C_{p,q} \norm{u}_{f_p^q}.
\end{align*}
Since for $I \in \D$
	\[\abs{y_I}^{\theta}\abs{x_I}^{1-\theta}=\left(\frac{\omega_I}{\abs{I}}\right)^{\frac{\theta}{q}}\abs{u_I}\left(\frac{\omega_I}{\abs{I}}\right)^{\frac{-\theta}{q}}=\abs{u_I}
\]we have $\abs{u}=\abs{y}^{\theta}\abs{x}^{1-\theta}$. 
\end{proof}

\begin{remark}
The uniqueness theorem of Cwickel and Nilsson (\cite{MR1996919}) gives the identification of the Banach lattice $X_0$ defined by (\ref{eq:intpisier2}) as $f_1^q$: Let $\frac{1}{p}=1-\theta+\frac{\theta}{q}$ and $f=(x_I)_{I \in \D} \in f_1^q$, then there exists a constant $c$ such that 
\begin{equation}
 c\norm{f}_{f_1^q} \leq \sup\left\{\norm{(\abs{x_I}^{1-\theta}\abs{y_I}^{\theta})_{I\in \D}}_{f_p^q}^{\frac{1}{1-\theta}}: y=(y_I)_{I \in \D} \in f_q^q, \norm{y}_{f_q^q}\leq 1\right\} \leq \norm{f}_{f_1^q}. 
\label{eq:intcwikel}
\end{equation}
Our Theorem \ref{th:app} complements the constructive proofs for (\ref{eq:intcwikel}) given by \cite{MR2183484} and \cite{zbMATH06162617}. The common denominator of Theorem \ref{th:app}, \cite{MR2183484} and \cite{zbMATH06162617} is the use of atomic decomposition as starting point for the proof. 
\end{remark}

\section{Appendix}
\label{sec:appendix}
\subsection*{Proof of the left-hand side of inequality (\ref{eq:atdec}).} The left-hand side inequality of (\ref{eq:atdec}) was stated without proof in \cite{MR2927805}. Since we use this inequality repeatedly in this paper we provide the proof here. It uses the ideas of \cite[Lemma 3.3]{MR2418798}, who in turn exploit the ideas of \cite[Theorem 1]{MR0352938} and \cite[p.336]{MR0425667}.

For the following definitions and statements we refer to \cite{MR2157745} and \cite{MR2418798}. Let $\E \subseteq \D$ be a non-empty collection of dyadic intervals.
We denote by $\E^*$ the set covered by $\E$, i.e. $\E^*=\bigcup_{I \in \E}I.$
In the following we define consecutive generations of $\E$.
We define $\G_0(\E)$ to be the maximal dyadic intervals of $\E$, where maximal refers to inclusion. Note that the maximal intervals of a collection $\E$ are pairwise disjoint intervals and that $\G_0(\E)$ covers the same set as $\E$.  Suppose that we have already defined the generations $\G_0(\E),\dots,\G_{n-1}(\E)$, then we define
	\[\G_{n}(\E)=\G_0(\E\setminus (\G_0(\E)\cup\dots\cup\G_{n-1}(\E))).
\]
Given $I \in \D$, let $I \cap \E=\{J \in \E:J \subseteq I\}$ and put
	\[\G_{\ell}(I,\E)=\G_{\ell}(I \cap \E), \hspace{0.3 cm} \text{ for } \ell \in \NN. 
\] 

We fix pairwise disjoint blocks of dyadic intervals $\{\C(I):I \in \E\}$ so that (\ref{eq:prep1})-(\ref{eq:sigma}) hold:
\begin{align}
	\label{eq:prep1}
	&\llbracket \mathcal{E}\rrbracket =\sup_{I \in \E}\frac{1}{\abs{I}}\sum_{J \in \E, J \subseteq I}{\abs{J}}<\infty,\\
	&\label{eq:prep2}
	\C(I)^*=I,
\end{align}
\begin{equation}
\label{eq:sigma}
\begin{split}
	& \text{The sigma algebra generated by } \{h_J: J\in \C(I)\} \text{ is purely atomic.}\\ 
	& \text{We denote by $\B(I)$ the set of atoms.} 
\end{split}
\end{equation}

For every $I\in \E$ we have
\begin{equation}
\abs{\G_{\ell}^*(I,\E)}\leq 4\cdot 2^{-\frac{2\ell}{4\llbracket \mathcal{E}\rrbracket+1}}\abs{I}. 
\end{equation}
Therefore, by the properties above we get for every $I \in \E$ and every $B \in \B(I)$
\begin{equation}
\label{eq:propei}
	\abs{B\cap \G_{\ell}^*(I,\E)}\leq 4\cdot 2^{-\frac{2\ell}{4\llbracket \mathcal{E}\rrbracket+1}}\abs{B}. 
\end{equation}
Let $x_J \in X$, $J \in \D$ and put for $I \in \E$
	\[u_I=\sum_{J \in \C(I)}{x_Jh_J}.
\] Note that by property (\ref{eq:sigma}) $u_I$ is constant on every atom $B \in \B(I)$. 

\bigskip
Let $1\leq p< \infty$. Then for $u=\sum_{I \in \E} u_I$ we claim
\begin{equation}
	\label{eq:proof}
	\norm{u}_{H^p_X}^p\leq c_p\sum_{I \in \E}{\norm{u_I}_{H^p_X}^p}.
\end{equation}
In order to prove inequality (\ref{eq:proof}) we  define for each $I \in \E$ the set 
	\[A_I=I\setminus \bigcup_{J \in \G_1(I,\E)}J.
\]
Note that by construction $\{A_I:I \in \E\}$ is a collection of pairwise disjoint and measurable sets such that $\bigcup_{I \in \E}A_I=\E^*$, where $\E^*$ is the set covered by $\E$, cf.\cite[Proposition 1]{MR2927805}. 
Therefore,
\begin{equation}
\label{eq:propak2}
\begin{split}
	\Bignorm{\sum_{I \in \E}u_I}_{H^p_X}&=\left(\int_0^1\mathbb S^p\Big(\sum_{I \in \E}u_I\Big)(t)dt\right)^{\frac{1}{p}}=\left(\sum_{K \in \E}\int_{A_K}\mathbb S^p\Big(\sum_{I \in \E}u_I\Big)(t)dt\right)^{\frac{1}{p}}.
	\end{split}
\end{equation}
By the definition of $A_I$ we get
\begin{equation}
\label{eq:defak}
\left(\sum_{K \in \E}\int_{A_K}\mathbb S^p\Big(\sum_{I \in \E}u_I\Big)(t)dt\right)^{\frac{1}{p}}=\bigg(\sum_{K \in \E}\int_{A_K}\mathbb S^p\Big(\sum_{\substack{I \in \E \\ I \supseteq K}}u_I\Big)(t)dt\bigg)^{\frac{1}{p}}.
\end{equation}
We know that $\G_0(K,\E)=K$. There exists a shortest dyadic interval $\G_{-1}(K,\E) \in \E$ such that $K$ is strictly contained in $\G_{-1}(K,\E)$. Then there exists a shortest dyadic interval $\G_{-2}(K,\E) \in \E$ such that $\G_{-1}(K,\E)$ is strictly contained in $\G_{-2}(K,\E)$. We continue this pattern $n(K)$ steps until $\G_{-n(K)}(K,\E)$ is a maximal interval in $\E$ and therefore not contained in any interval in $\E$. We have
	\[K=\G_{0}(K,\E)\subset\G_{-1}(K,\E)\subset \G_{-2}(K,\E)\subset \dots \subset \G_{-n(K)}(K,\E).
\]
Thus 
\begin{equation}
\label{eq:gen}
	\sum_{I \in \E,\, I \supseteq K}u_I=\sum_{\ell=0}^{n(K)}u_{\G_{-\ell}(K,\E)}.
\end{equation}
Summarizing the equations (\ref{eq:propak2}), (\ref{eq:defak}) and (\ref{eq:gen}) we have
\begin{equation}
\begin{split}
\Bignorm{\sum_{I \in \E}u_I}_{H^p_X}&= \left(\sum_{K \in \E}\int_{A_K}\mathbb S^p\left(\sum_{\ell=0}^{\infty}1_{[0,n(K)]}(\ell)u_{\G_{-\ell}(K,\E)}\right)(t)dt\right)^{\frac{1}{p}}.
\label{eq:sum1}
\end{split}
\end{equation}
Put $\C_{\ell,K}=\C(\G_{-\ell}(K,\E))$, then $u_{\G_{-\ell}(K,\E)}=\sum_{J \in \C_{\ell,K}}x_Jh_J$ and we have
\begin{equation}
\begin{aligned}
\Bignorm{\sum_{I \in \E}u_I}_{H^p_X}&=\bigg(\sum_{K \in \E}\int_{A_K} \bigg(\int_{0}^1{\biggnorm{\sum_{\ell=0}^{\infty}1_{[0,n(K)]}(\ell)\sum_{J }x_Jh_J(t)r_J(s)}_X^2 ds}\bigg)^{\frac{p}{2}}dt\bigg)^{\frac{1}{p}}.
\end{aligned}
\label{eq:sum2}
\end{equation} 
We define the sequence $(a_{\ell})_{\ell=0}^{\infty}$ of elements in $L^p_{L^2_X}(A_K \times \E, dtdz)$, where $dz$ is the counting measure on $\E$, as follows: $a_{\ell}(t,K)=1_{[0,n(K)]}(\ell)\sum_{J}x_Jh_J(t)r_J$. By the triangle inequality we get
\begin{equation}
\begin{split}
&\left(\sum_{K \in \E}\int_{A_K}{\biggnorm{\sum_{\ell=0}^{\infty}a_{\ell}(t,K)}_{L^2_X}^p}dt\right)^{\frac{1}{p}}\leq \sum_{\ell=0}^{\infty}\left(\sum_{K \in \E}\int_{A_K} {\Bignorm{a_{\ell}(t,K)}_{L^2_X}^p }dt\right)^{\frac{1}{p}}\\
&\quad\quad=\sum_{\ell=0}^{\infty}\left(\sum_{K \in \E}1_{[0,n(K)]}(\ell)\int_{A_K} \bigg(\int_{0}^1{\biggnorm{\sum_{J \in \C_{\ell,K}}x_Jh_J(t)r_J(s)}_X^2 ds}\bigg)^{\frac{p}{2}}dt\right)^{\frac{1}{p}}\\
&\quad\quad=\sum_{\ell=0}^{\infty}\left(\sum_{K \in \E}1_{[0,n(K)]}(\ell)\int_{A_K} \mathbb{S}^p\left(u_{\G_{-\ell}(K,\E)}\right)(t)dt\right)^{\frac{1}{p}}.
\label{eq:triangle}
\end{split}
\end{equation}
Combining inequality (\ref{eq:sum2}) and (\ref{eq:triangle}) we obtain
\begin{equation}
	\Bignorm{\sum_{I \in \E}u_I}_{H^p_X}\leq \sum_{\ell=0}^{\infty}\left(\sum_{K \in \E}1_{[0,n(K)]}(\ell)\int_{A_K} \mathbb{S}^p\left(u_{\G_{-\ell}(K,\E)}\right)(t)dt\right)^{\frac{1}{p}}.
	\label{eq:sum3}
\end{equation}
By the definition of $\G_{-\ell}(K,\E)$ we know that for each $\ell \in [0,n(K)]$ there exists a unique $I \in \E$ such that $\G_{-\ell}(K,\E)=I$. Therefore,
\begin{equation}
	\label{eq:rename1}
\sum_{K \in \E}1_{[0,n(K)]}(\ell)\int_{A_K} \mathbb{S}^p\left(u_{\G_{-\ell}(K,\E)}\right)(t)dt
		=\sum_{K \in \E}\sum_{\substack{I \in \E\\ \G_{-\ell}(K,\E)=I}}\int_{A_K} \mathbb{S}^p\left(u_{I}\right)(t)dt.
\end{equation}
Since the set $\{I,K \in \E:I=\G_{-\ell}(K,\E)\}$ contains the same elements as $\{I,K \in \E:K \in \G_{\ell}(I,\E)\}$ we can rewrite the sums in (\ref{eq:rename1}) and obtain with inequality (\ref{eq:sum3}):
\begin{equation}
	\label{eq:rename2}
		\Bignorm{\sum_{I \in \E}u_I}_{H^p_X}\leq \sum_{\ell=0}^{\infty}\bigg(\sum_{I \in \E}\sum_{ K\in \G_{\ell}(I,\E)}\int_{K} \mathbb{S}^p(u_{I})(t)dt\bigg)^{\frac{1}{p}}.
\end{equation}
We used above that $A_K \subseteq K$. 
For $\ell=0$ we obtain for the right-hand side in (\ref{eq:rename2}):
\begin{equation}
\label{eq:l0}
	\bigg(\sum_{I \in \E}\int_{I} \mathbb{S}^p(u_{I})(t)dt\bigg)^{\frac{1}{p}}=\bigg(\sum_{I \in \E}\norm{u_I}_{H^p_X}^p\bigg)^{\frac{1}{p}}.
\end{equation}
For $\ell\geq1$ we can rewrite the right-hand side in (\ref{eq:rename2}) as follows:
\begin{equation}
\label{eq:l2}
\sum_{K \in \G_{\ell}(I,\E)}\int_{K} \mathbb{S}^p(u_{I})(t)dt=\sum_{B\in \B(I)}\sum_{\substack{K \in \G_{\ell}(I,\E)\\ K \subseteq B}}\int_{K} \mathbb{S}^p(u_{I})(B)dt.
\end{equation}
 
By (\ref{eq:sigma}) $u_I$ is constant on each atom $B \in \B(I)$ and the term $\mathbb{S}^p(u_{I})(B)$ is well-defined. Therefore, we get
\begin{equation}
\label{eq:l22}
	\sum_{B\in \B(I)}\sum_{\substack{K \in \G_{\ell}(I,\E)\\ K \subseteq B}}\int_{K} \mathbb{S}^p(u_{I})(B)dt=\sum_{B\in \B(I)} \mathbb{S}^p(u_{I})(B)\sum_{\substack{K \in \G_{\ell}(I,\E)\\ K \subseteq B}}\abs{K}.
\end{equation}
By inequality (\ref{eq:propei}) we have
	\[\sum_{\substack{K \in \G_{\ell}(I,\E)\\ K \subseteq B}}\abs{K}=\abs{B\cap\G_\ell^*(I,\E)} \leq 4\cdot 2^{-\frac{2\ell}{4\llbracket \mathcal{E}\rrbracket+1}}\abs{B}.
\]
We get the following estimate for the right-hand side in (\ref{eq:l22})
\begin{equation}
\begin{split}
\label{eq:l222}
\sum_{B\in \B(I)} \mathbb{S}^p(u_{I})(B)\sum_{\substack{K \in \G_{\ell}(I,\E)\\ K \subseteq B}}\abs{K} &\leq 4\cdot 2^{-\frac{2\ell}{4\llbracket \mathcal{E}\rrbracket+1}}\sum_{B\in \B(I)} \mathbb{S}^p(u_{I})(B) \abs{B}\\
&= 4\cdot 2^{-\frac{2\ell}{4\llbracket \mathcal{E}\rrbracket+1}}\int_{0}^1{\mathbb{S}^p(u_{I})(t)dt}\\
&= 4\cdot 2^{-\frac{2\ell}{4\llbracket \mathcal{E}\rrbracket+1}}\norm{u_I}_{H_X^p}^p.
\end{split}
\end{equation}
Combining inequalities (\ref{eq:rename2}), (\ref{eq:l0}), (\ref{eq:l2}), (\ref{eq:l22}) and (\ref{eq:l222}) we obtain
\begin{equation}
	\Bignorm{\sum_{I \in \E}u_I}_{H^p_X}\leq \Big(1+4^{\frac{1}{p}}\sum_{l=1}^{\infty}2^{-\frac{2\ell}{p(4\llbracket \mathcal{E}\rrbracket+1)}}\Big)\bigg(\sum_{I \in \E}\norm{u_I}_{H_X^p}^p\bigg)^{\frac{1}{p}}.
\end{equation}

\subsection*{Acknowledgements}
This paper is part of the second named author's PhD thesis written at the Department of Analysis, J. Kepler University Linz. This research has been supported by the Austrian Science foundation (FWF) Pr.Nr.P23987 and Pr.Nr.P22549 and by the NSF sponsored Workshop in Analysis and Probability at Texas A\&M University, 2013. 
\bibliographystyle{alpha}
\bibliography{bib}

\begin{thebibliography}{Mau74b}

\bibitem[Bow13]{zbMATH06162617}
M.~Bownik.
\newblock {Extrapolation of discrete Triebel-Lizorkin spaces.}
\newblock {\em {Math. Nachr.}}, 286(5-6):492--502, 2013.

\bibitem[Cal64]{MR0167830}
A.-P. Calder{\'o}n.
\newblock Intermediate spaces and interpolation, the complex method.
\newblock {\em Studia Math.}, 24:113--190, 1964.

\bibitem[CNS03]{MR1996919}
M.~Cwikel, P.~G. Nilsson, and G.~Schechtman.
\newblock Interpolation of weighted {B}anach lattices. {A} characterization of
  relatively decomposable {B}anach lattices.
\newblock {\em Mem. Amer. Math. Soc.}, 165(787):vi+127, 2003.

\bibitem[CT86]{MR871851}
B.~Cuartero and M.~A. Triana.
\newblock {$(p,q)$}-convexity in quasi-{B}anach lattices and applications.
\newblock {\em Studia Math.}, 84(2):113--124, 1986.

\bibitem[DJT95]{MR1342297}
J.~Diestel, H.~Jarchow, and A.~Tonge.
\newblock {\em Absolutely summing operators}, volume~43 of {\em Cambridge
  Studies in Advanced Mathematics}.
\newblock Cambridge University Press, Cambridge, 1995.

\bibitem[DPR72]{MR0365097}
E.~Dubinsky, A.~Pe{\l}czy{\'n}ski, and H.~P. Rosenthal.
\newblock On {B}anach spaces {$X$} for which {$\Pi _{2}({\mathcal L}_{\infty
  },\,X)=B({\mathcal L}_{\infty },\,X)$}.
\newblock {\em Studia Math.}, 44:617--648, 1972.
\newblock Collection of articles honoring the completion by Antoni Zygmund of
  50 years of scientific activity, VI.

\bibitem[FJ90]{MR1070037}
M.~Frazier and B.~Jawerth.
\newblock A discrete transform and decompositions of distribution spaces.
\newblock {\em J. Funct. Anal.}, 93(1):34--170, 1990.

\bibitem[GM08]{MR2418798}
S.~Geiss and P.~F.~X. M{\"u}ller.
\newblock Haar type and {C}arleson constants.
\newblock {\em Bull. Lond. Math. Soc.}, 40(3):432--438, 2008.

\bibitem[GMP05]{MR2183484}
S.~Geiss, P.~F.~X. M{\"u}ller, and V.~Pillwein.
\newblock A remark on extrapolation of rearrangement operators on dyadic
  {$H^s$}, {$0<s\leq 1$}.
\newblock {\em Studia Math.}, 171(2):197--205, 2005.

\bibitem[JJ82]{MR671315}
S.~Janson and P.~W. Jones.
\newblock Interpolation between {$H^{p}$} spaces: the complex method.
\newblock {\em J. Funct. Anal.}, 48(1):58--80, 1982.

\bibitem[JO74]{MR0352938}
W.~B. Johnson and E.~Odell.
\newblock Subspaces of {$L_{p}$} which embed into {$l_{p}$}.
\newblock {\em Compositio Math.}, 28:37--49, 1974.

\bibitem[Joh76]{MR0425667}
W.~B. Johnson.
\newblock Operators into {$L_{p}$} which factor through {$1_{p}$}.
\newblock {\em J. London Math. Soc. (2)}, 14(2):333--339, 1976.

\bibitem[Kah85]{MR833073}
J.~P. Kahane.
\newblock {\em Some random series of functions}, volume~5 of {\em Cambridge
  Studies in Advanced Mathematics}.
\newblock Cambridge University Press, Cambridge, second edition, 1985.

\bibitem[Kal84]{MR752808}
N.~J. Kalton.
\newblock Convexity conditions for nonlocally convex lattices.
\newblock {\em Glasgow Math. J.}, 25(2):141--152, 1984.

\bibitem[LT79]{MR540367}
J.~Lindenstrauss and L.~Tzafriri.
\newblock {\em Classical {B}anach spaces. {II}}, volume~97 of {\em Ergebnisse
  der Mathematik und ihrer Grenzgebiete [Results in Mathematics and Related
  Areas]}.
\newblock Springer-Verlag, Berlin, 1979.
\newblock Function spaces.

\bibitem[LT91]{MR1102015}
M.~Ledoux and M.~Talagrand.
\newblock {\em Probability in {B}anach spaces}, volume~23 of {\em Ergebnisse
  der Mathematik und ihrer Grenzgebiete (3) [Results in Mathematics and Related
  Areas (3)]}.
\newblock Springer-Verlag, Berlin, 1991.
\newblock Isoperimetry and processes.

\bibitem[Mau74a]{MR0344931}
B.~Maurey.
\newblock {\em Th\'eor\`emes de factorisation pour les op\'erateurs lin\'eaires
  \`a valeurs dans les espaces {$L^{p}$}}.
\newblock Soci\'et\'e Math\'ematique de France, Paris, 1974.
\newblock With an English summary, Ast{\'e}risque, No. 11.

\bibitem[Mau74b]{MR0420319}
B.~Maurey.
\newblock Une nouvelle caract\'erisation des applications {$(p,q)$}-sommantes.
\newblock In {\em S\'eminaire {M}aurey-{S}chwartz 1973--1974: {E}spaces
  {L{$\sup{p}$}}, applications radonifiantes et g\'eom\'etrie des espaces de
  {B}anach, {E}xp. {N}o. 12}, pages 16 pp. (errata, p. E.1). Centre de Math.,
  \'Ecole Polytech., Paris, 1974.

\bibitem[M{\"u}l05]{MR2157745}
P.~F.~X. M{\"u}ller.
\newblock {\em Isomorphisms between {$H^1$} spaces}, volume~66 of {\em Instytut
  Matematyczny Polskiej Akademii Nauk. Monografie Matematyczne (New Series)
  [Mathematics Institute of the Polish Academy of Sciences. Mathematical
  Monographs (New Series)]}.
\newblock Birkh\"auser Verlag, Basel, 2005.

\bibitem[M{\"u}l12]{MR2927805}
P.~F.~X. M{\"u}ller.
\newblock Extrapolation of vector-valued rearrangement operators {II}.
\newblock {\em J. Lond. Math. Soc. (2)}, 85(3):722--736, 2012.

\bibitem[Pie67]{MR0216328}
A.~Pietsch.
\newblock Absolut {$p$}-summierende {A}bbildungen in normierten {R}\"aumen.
\newblock {\em Studia Math.}, 28:333--353, 1966/1967.

\bibitem[Pis79a]{MR557371}
G.~Pisier.
\newblock La m\'ethode d'interpolation complexe: applications aux treillis de
  {B}anach.
\newblock In {\em S\'eminaire d'{A}nalyse {F}onctionnelle (1978--1979)}, pages
  Exp. No. 17, 18. \'Ecole Polytech., Palaiseau, 1979.

\bibitem[Pis79b]{MR555306}
G.~Pisier.
\newblock Some applications of the complex interpolation method to {B}anach
  lattices.
\newblock {\em J. Analyse Math.}, 35:264--281, 1979.

\bibitem[Ros76]{MR0430749}
H.~P. Rosenthal.
\newblock Some applications of {$p$}-summing operators to {B}anach space
  theory.
\newblock {\em Studia Math.}, 58(1):21--43, 1976.

\bibitem[Woj91]{MR1144277}
P.~Wojtaszczyk.
\newblock {\em Banach spaces for analysts}, volume~25 of {\em Cambridge Studies
  in Advanced Mathematics}.
\newblock Cambridge University Press, Cambridge, 1991.

\bibitem[Woj97]{MR1441252}
P.~Wojtaszczyk.
\newblock Uniqueness of unconditional bases in quasi-{B}anach spaces with
  applications to {H}ardy spaces. {II}.
\newblock {\em Israel J. Math.}, 97:253--280, 1997.

\end{thebibliography}
\end{document}